\documentclass[reqno, a4paper, 11pt]{amsart}
\usepackage{amsmath,amsthm,amssymb}
\usepackage{graphicx}
\usepackage{time} 
\usepackage{comment}
\usepackage{thmtools}
\usepackage{thm-restate}

\usepackage[dvipdfmx]{hyperref}
\hypersetup{
	setpagesize=false,
	bookmarksnumbered=true,%
	bookmarksopen=true,%
	colorlinks=true,%
	linkcolor=magenta,
	citecolor=blue,
}
\theoremstyle{plain}
\newtheorem{theorem}{Theorem}[section]
\newtheorem{lemma}[theorem]{Lemma}
\newtheorem{corollary}[theorem]{Corollary}
\newtheorem{proposition}[theorem]{Proposition}
\theoremstyle{definition}
\newtheorem{definition}[theorem]{Definition}
\newtheorem{remark}[theorem]{Remark}
\newcommand{\ZZ}{\mathbb{Z}}
\newcommand{\CC}{\mathbb{C}}
\newcommand{\calT}{\mathcal{T}}
\newcommand{\calD}{\mathcal{D}}
\newcommand{\Int}{\mathrm{int\,}}

\title[Trisection genus of knot traces]{Trisection genus of knot traces}
\author[Natsuya Takahashi]{Natsuya Takahashi}
\date{May 28, 2026.}
\subjclass[2020]{57K40, 57R65}
\keywords{4-manifold, knot trace, relative trisection}
\address{Department of Mathematics, College of Science and Technology, Nihon University, 1-8-14, Kanda-Surugadai, Chiyoda-ku, Tokyo 101-8308, Japan}
\email{takahashi.natsuya@nihon-u.ac.jp}
\begin{document}
\begin{abstract}
We classify knot traces with trisection genus at most 2.
%
%
We give infinitely many knots whose traces have trisection genus 3, and infinitely many knots whose traces have trisection genus 4.
We also show that there exist infinite families of knots whose traces have arbitrarily large trisection genus.
In addition, we determine or give sharp bounds for the trisection genus of the traces of several well-known knots, such as the figure-eight knot, the $(p, pq+1)$-torus knots, and the $(-2, 3, 2n-1)$-pretzel knots.
\end{abstract}
\maketitle

\section{Introduction}

It is a fundamental problem in low-dimensional topology to classify $n$-dimensional manifolds with respect to natural invariants.
%
For smooth $4$-manifolds, one of the most natural invariants is the trisection genus, which arises from the notion of trisections introduced by Gay and Kirby~\cite{GayKir16}.
This invariant can be considered as a $4$-dimensional analogue of the Heegaard genus for $3$-manifolds.

In the early stages of trisection theory, it was shown in~\cite{GayKir16} and \cite{MeiZup17_1} that closed $4$-manifolds with trisection genus at most $2$ are limited to finitely many standard ones.
%
On the other hand, there exist infinitely many closed $4$-manifolds with trisection genus $3$.
Although genus-$3$ manifolds have been studied in \cite{Mei18} and \cite{AraZup25a}, a complete classification has not yet been achieved.
Determining the trisection genus is also challenging for $4$-manifolds with boundary.
While it is easy to see that $4$-manifolds admitting a genus-$0$ relative trisection are exactly $4$-dimensional $1$-handlebodies, little is known about the higher-genus case.


We study the problem of classifying 4-manifolds with boundary by their trisection genus.
A natural approach to this problem is to restrict the class of $4$-manifolds under consideration.
In this paper, we focus on knot traces, a well-studied class of simply-connected $4$-manifolds with boundary.
For a knot $K$ in the $3$-sphere $S^3$ and an integer $m$, the $m$-trace $X_m(K)$ of $K$ is the $4$-manifold obtained by attaching a single $4$-dimensional $2$-handle to the $4$-ball $D^4$ along $K\subset\partial{D^4}=S^3$ with framing $m$.
The boundary $\partial X_m(K)$ is diffeomorphic to the closed $3$-manifold obtained by Dehn surgery on $S^3$ along the knot $K$ with coefficient $m$. We denote this $3$-manifold by $S^3_m(K)$.

%
From now on, we denote the trisection genus of a $4$-manifold $X$ by $g(X)$.
Our first main result is a complete classification of knot traces $X_m(K)$ satisfying $g(X_m(K)) \leq 2$.

\begin{restatable}{theorem}{maingonetwo}\label{thm:g=12}
Let $K$ be a knot in $S^3$ and let $m$ be an integer.
\begin{itemize}
\item There is no knot $K$ and no integer $m$ such that $g(X_m(K))=0$.
\item If $g(X_m(K))=1$, then $K$ is the unknot $U$ and $m=\pm 1$.
\item If $g(X_m(K)) = 2$, then either $K$ is the unknot $U$ with $m \neq \pm 1$, the right-handed trefoil knot with $m = 5$, or the left-handed trefoil knot with $m = -5$.
\end{itemize}
\end{restatable}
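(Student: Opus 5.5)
We treat the three items in turn, using the structure of relative trisections. Recall that a $(g,k;p,b)$-relative trisection of a $4$-manifold $X$ with connected boundary satisfies $\chi(X)=g-3k+3p+2b-1$, where $b=1$ here since $\partial X_m(K)$ is connected. The boundary inherits an open book decomposition whose page is a surface of genus $p$ with $b$ boundary components. We also have $k\ge 2p+b-1$, and the $1$-skeleton count gives $g\ge k$. For a knot trace, $\chi(X_m(K))=2$ and $\pi_1=1$.

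\textbf{Genus $0$.} A genus-$0$ relative trisection forces $k=p=0$ and produces $\natural^k S^1\times D^3$, which equals $D^4$ here. But $H_2(X_m(K))\cong\ZZ$, so genus $0$ is impossible.

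\textbf{Genus $1$.} The Euler characteristic formula gives $2=g-3k+3p+1$, so with $g=1$ we get $k=p$. Also $k\ge 2p$, so $k=p=0$. The boundary open book then has disk pages, so it is an open book of $S^3$. Thus $\partial X_m(K)=S^3_m(K)\cong S^3$, which forces $|m|=1$. Moreover, the relative trisection diagram is a genus-$1$ surface with one boundary component carrying three curves $\alpha,\beta,\gamma$, where $k=0$ means the pairs are in standard position up to boundary-fixed slides. Enumerating these diagrams directly, we expect to get exactly $\pm\CC P^2$ minus a ball, i.e.\ $X_{\pm1}(U)$. By Gordon--Luecke, $S^3_{\pm1}(K)=S^3$ already forces $K=U$. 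Conversely, the standard genus-$1$ diagram of $\CC P^2$ minus a ball shows that $g(X_{\pm1}(U))\le1$.

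\textbf{Genus $2$: existence.} For $K=U$ with $m\ne\pm1$, the trace $X_m(U)$ is the $D^2$-bundle over $S^2$ of Euler number $m$. We will exhibit an explicit genus-$2$ relative trisection diagram for it, for instance with $(g,k;p,b)=(2,1;0,1)$, where the boundary open book is a Hopf-plumbing of annuli giving the lens space $L(m,1)$. For the trefoils, we use that $S^3_{+5}(T_{2,3})$ is a lens space (Moser), so we expect a diagram with $p=0$ again. We would build it from a Kirby diagram via the algorithm of Castro--Gay--Pinzón-Caicedo, or directly by placing the trefoil as a $(1,1)$-curve on the genus-$2$ surface.

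\textbf{Genus $2$: classification.} Now $2=2-3k+3p+1$ gives $3k=3p+1$, which has no integer solution, so the formula as recalled needs $b$ handled correctly. The precise formula I would use is $\chi=g+p-k+\dots$; I would recompute it from the paper's conventions. The outcome should be that either $p=0$, so the boundary has a planar open book with few pages, or $p\ge1$ with $k$ bounded. In the case $p=0$, the boundary open book of a genus-$2$ relative trisection has pages that are disks or annuli, and its monodromy is a product of at most a couple of Dehn twists. Hence $\partial X$ is a lens space of the form $L(m,1)$ or $S^3$. Then $S^3_m(K)$ must be a lens space $L(m,1)$ with $|m|$ equal to the order. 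By the solution of the lens space realization problem (Greene), or more simply Kronheimer--Mrowka--Ozsváth--Szabó for $L(n,1)$ surgeries, $K$ is the unknot or a trefoil with $m=\pm5$. Here $L(5,1)$ arises from $+5$-surgery on the right-handed trefoil, and we must rule out $L(5,4)$-type orientation issues by checking the open book's orientation.

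The main obstacle is this last step. We must pin down exactly which closed $3$-manifolds and $4$-manifolds can arise from genus-$2$ relative trisection diagrams with the allowed $(k,p,b)$, enumerating all such diagrams up to handle slides. We must then verify that the trace is determined, since the boundary alone does not determine the $4$-manifold.
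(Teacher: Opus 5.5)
\textbf{Gap 1: the parameter count.} This is the heart of the proof, and it has a genuine gap.

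You set $b=1$ because $\partial X_m(K)$ is connected. But $b$ is the number of boundary components of the triple-intersection surface $\Sigma_{g,b}$, i.e.\ the number of binding components of the induced open book on $\partial X$. A connected boundary allows any $b\ge 1$. Your inequality $g\ge k$ is also false for relative trisections: the constraint is $2p+b-1\le k\le g+p+b-1$, and genus-$0$ relative trisections of $\natural^k(S^1\times D^3)$ have $k=b-1$.

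These two errors are exactly what produce the impossible equation $3k=3p+1$ in genus $2$. As a result, your classification never determines the page. The case ``$p\ge 1$ with $k$ bounded'' is left untreated, and your proposed type $(2,1;0,1)$ for $X_m(U)$ cannot occur: a disk page gives $S^3$, and the formula gives $\chi=0$.

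The missing step is short. Combining $g-3k+3p+2b-1=2$ with $k\ge 2p+b-1$ gives $3p+b\le g$.
\begin{itemize}
\item This rules out $g=0$.
\item For $g=1$ it forces $(1,0;0,1)$. Your genus-$1$ conclusion is right, but only because the wrong assumption happens to pick the right $b$.
\item For $g=2$ it forces $p=0$ and $b\le 2$, and then $3k=2b-1$ gives $(2,1;0,2)$.
\end{itemize}
So the boundary has an open book with annular page and monodromy $\tau^n$ for some $n\in\ZZ$. Hence $S^3_m(K)$ is homeomorphic to $L(|m|,1)$, including $S^3$ and $S^1\times S^2$.

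What you call the ``main obstacle'' (enumerating genus-$2$ diagrams up to slides and checking that the $4$-manifold is determined) is not needed. The statement only constrains $(K,m)$, and that is read off from the boundary.

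\textbf{Gap 2: orientation.} The lower bound yields only a homeomorphism $S^3_m(K)\cong \pm L(|m|,1)$. The open book cannot fix the sign, since $(A,\tau^{n})$ and $(A,\tau^{-n})$ realize the two orientations.

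Kronheimer--Mrowka--Ozsv\'ath--Szab\'o covers only the orientation-preserving case $S^3_m(K)\cong S^3_m(U)$. The trefoil is precisely the other case: $S^3_5(T_{2,3})\cong -S^3_5(U)$. You can see this from the $d$-invariants, or simply because KMOS forbids the orientation-preserving identification. So this case cannot be ``ruled out by checking the open book's orientation.''

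You need a classification of knots with $S^3_m(K)\cong -S^3_m(U)$. The paper takes this from Tange's theorem. Greene's realization theorem together with Floer-homology detection of the unknot and trefoil would also work, but that has to be carried out.

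\textbf{What is fine.} The rest matches the paper: Gordon--Luecke in genus $1$, and using $g(X_{\pm 1}(U))\le 1$ to exclude $m=\pm 1$ from the unknot case in genus $2$.
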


In contrast to the cases above, there exist infinitely many knots whose traces have trisection genus $3$.

\begin{restatable}{theorem}{maingthree}\label{thm:g=3}
There exist infinitely many torus knots $K$ such that $g(X_m(K))=3$ for infinitely many integers $m$.
Furthermore, for any knot $K$ and integer $m$, if $g(X_m(K))=3$, then the boundary $S^3_m(K)$ is homeomorphic to the closed $3$-manifold $M(a_1,a_2,a_3)$ given by the surgery diagram in Figure~\ref{fig:Kd-Seif3sing} for some $a_1, a_2, a_3 \in \ZZ$.
\end{restatable}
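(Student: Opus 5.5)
The statement has two parts, an existence claim and a constraint on the boundary. I would prove them separately, using for each the standard facts about relative trisections.

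\emph{Upper bound and boundary constraint.} A $(g,k;p,b)$-relative trisection of $X$ induces an open book decomposition of $\partial X$. Its page is a surface $P$ of genus $p$ with $b$ boundary components. The central surface $\Sigma$ has genus $g$, and the relations $g \geq 2p+b-1$ and $g \geq k$ hold; more precisely, $k \geq 2p+b-1$ for the handlebody sectors. The trace $X_m(K)$ is simply connected with $\partial$ connected and $\chi(X_m(K))=2$. Combining
\[
\chi = 1 + g - 3k + 2p + b - 1
\]
with the Euler characteristic computations gives constraints of the form $g = 3k - 2p - b + 2 - (\text{stuff})$. From these I would enumerate the admissible tuples $(g,k;p,b)$ with $g=3$ and $k \leq 3$. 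The homology requirements are $H_1=0$ and $H_2=\ZZ$. Using the relation $k = 2p+b-1$ versus larger values, they should force the page to be a planar surface ($p=0$) with at most $4$ boundary components, or a genus-$1$ page with one boundary component.

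Next I would use the relative trisection diagram. The boundary $3$-manifold is described by the $\alpha$- and $\beta$-curves restricted to the page together with the monodromy, which arises from the arcs. This gives a surgery description of $\partial X$ as a plumbing/surgery on a link living in $P \times I$. A planar page with $b \leq 4$ boundary components, together with the genus constraints, should yield a three-component surgery diagram of the form in Figure~\ref{fig:Kd-Seif3sing}. That diagram is a Seifert fibered space over $S^2$ with at most three exceptional fibers. The integers $a_i$ record the framings coming from the Dehn twists in the monodromy. The genus-$1$, one-boundary case needs its own argument: it either gives an already excluded possibility (genus $\leq 2$ by Theorem~\ref{thm:g=12}), or it also reduces to $M(a_1,a_2,a_3)$. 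I would use the classification of genus-$\le 2$ traces to discard tuples that would actually realize smaller genus.

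\emph{Existence.} I would take torus knots $T(p,q)$ with small parameters, for instance $T(2,2n+1)$ or $T(p,p+1)$-type families, and framing $m = pq \pm 1$ or a range of $m$. For these, $S^3_m(K)$ is a Seifert fibered space with three exceptional fibers, matching $M(a_1,a_2,a_3)$. I would then draw an explicit genus-$3$ relative trisection diagram of $X_m(K)$. This is done by placing $K$ on a Heegaard/page surface following the Castro--Gay--Pinzón-Caicedo algorithm for adding a $2$-handle to $D^4$. When $K$ is a torus knot on a genus-$1$ surface, the algorithm produces genus roughly $3$, and the framing can be varied through infinitely many $m$ by adding twists along a curve on the page.

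The matching lower bound $g \geq 3$ comes from Theorem~\ref{thm:g=12}, since these knots are neither the unknot nor a trefoil with $\pm 5$ framing. Infinitely many distinct torus knots and infinitely many $m$ for each then give the first claim.

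The main obstacle is the constraint step: showing that every genus-$3$ relative trisection of a knot trace has a boundary of the specified Seifert form. This requires a careful case analysis of the possible pages and monodromies, and in particular excluding higher-genus pages.
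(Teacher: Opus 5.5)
\textbf{Gap in the boundary constraint.} This half fails as written, and the failure starts with your Euler characteristic formula. The correct formula (Proposition~\ref{prop:Euler-rel}) is $\chi(X)=g-3k+3p+2b-1$. Yours gives $\chi=0$ for $(3,2;0,3)$, which is the only tuple that actually occurs. The relevant inequalities are $2p+b-1\le k\le g+p+b-1$; neither $g\ge k$ nor $k\le 3$ is a constraint, and homology plays no role. With $g=3$ and $\chi=2$ one gets $3k=3p+2b$. Combined with $k\ge 2p+b-1$ this yields $3p+b\le 3$, and integrality of $k$ forces $3\mid b$. Hence $(g,k;p,b)=(3,2;0,3)$, and the page is a pair of pants.

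The outcome you predict (a planar page with up to four boundary components, or a $\Sigma_{1,1}$ page) is essentially the $g=4$ list of Lemma~\ref{lem:gkpb}. For those pages your conclusion is false: such open books admit pseudo-Anosov monodromy, and the paper's own $(4,3;0,4)$-diagrams for $X_m(4_1)$ have hyperbolic boundary for most $m$. Your fallback of discarding the $\Sigma_{1,1}$ case via Theorem~\ref{thm:g=12} is not meaningful, since the page genus has nothing to do with the trisection genus. Once the tuple is pinned down, the missing step is simple. $\mathrm{Mod}(\Sigma_{0,3},\partial)\cong\ZZ^3$ is generated by the three boundary-parallel twists, so the monodromy is $\tau_1^{a_1}\tau_2^{a_2}\tau_3^{a_3}$ and the boundary is $M(a_1,a_2,a_3)$. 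No analysis of surgery descriptions read off from diagrams is needed.

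\textbf{Gap in the existence part.} This half is not yet a proof, because ``the algorithm produces genus roughly $3$'' gives no upper bound. The real content is an explicit family of $(3,2;0,3)$-diagrams. The paper's $\calD_{k,l}$ is shown, via Kim--Miller's algorithm and handle moves, to represent $X_{k^2+3k+l+1}(T_{k+1,k+2})$, which covers every framing on every $T_{p,p+1}$.

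The choice of family also matters, and your claim that $S^3_m(T_{2,2n+1})$ matches $M(a_1,a_2,a_3)$ is false for $n\ge 2$. From its surgery picture (a $0$-framed unknot with meridians framed $a_i$), $M(a_1,a_2,a_3)$ is Seifert fibered with every exceptional fiber having invariant $\beta\equiv\pm1 \pmod{\alpha}$. In $S^3_m(T_{2,2n+1})$, however, the fiber of order $2n+1$ has $\beta\equiv\pm n$. So by the very constraint you are proving, these traces have genus at least $4$ for all but finitely many $m$. For $T_{p,p+1}$, by contrast, all three fibers have $\beta\equiv\pm1$.

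Your lower bound via Theorem~\ref{thm:g=12} is fine and not circular. Its forward direction uses only Theorem~\ref{thm:lb-tg-chi2}, Property P, and Kronheimer--Mrowka--Ozsv\'ath--Szab\'o together with Tange, which is exactly what the paper relies on.
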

\begin{figure}[!htbp]
\centering
\includegraphics[scale=1]{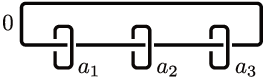}
\caption{A surgery diagram of the closed $3$-manifold $M(a_1,a_2,a_3)$.}
\label{fig:Kd-Seif3sing}
\end{figure}

Note that $M(a_1, a_2, a_3)$ is a Seifert fibered space for most $(a_1,a_2,a_3)$, while in certain cases it is a connected sum of two lens spaces (see Remark~\ref{rem:Mabc}).

We give specific examples of knot traces that satisfy the properties of Theorems ~\ref{thm:g=12} and \ref{thm:g=3}.
The next proposition determines the trisection genus of traces of the unknot.

\begin{restatable}{proposition}{mainunknot}\label{thm:tg-unknot}
The trisection genus of the $m$-trace of the unknot $U$ is given by
\begin{equation*}
g(X_m(U)) =
	\begin{cases}
		1 & \text{if $m = \pm 1$}, \\
		2 & \text{otherwise}.
	\end{cases}
\end{equation*}
\end{restatable}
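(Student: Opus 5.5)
We prove the upper bounds by explicit constructions and the lower bounds with algebraic invariants of a relative trisection. Recall that a $(g,k;p,b)$-relative trisection of $X$ satisfies $\chi(X) = g - 3k + 3p + 2b - 1$. Here $b$ is the number of boundary components of the page and $p$ its genus. Also, $\partial X$ inherits an open book whose page is $\Sigma_{p,b}$. For $X = X_m(U)$ we have $\chi(X)=2$, $\partial X = L(m,1)$ (with $L(0,1)=S^1\times S^2$), and $H_2(X)=\ZZ$ with intersection form $\langle m\rangle$.

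\emph{Lower bounds.} Genus $0$ relative trisections yield only $4$-dimensional $1$-handlebodies $\natural^k S^1\times D^3$. These have $H_2=0$, so $g(X_m(U))\ge 1$ for every $m$; this is also the first item of Theorem~\ref{thm:g=12}. For genus $1$, I would enumerate the possibilities directly. With $g=1$ we have $k\le 1$, and the page $\Sigma_{p,b}$ embeds in the genus-$1$ central surface minus the spine, which forces $p\le 1$. Combined with $\chi=2$, the possible parameters are very restricted: essentially $p=0,b=1$ (disk page, $\partial X=S^3$) or $p=0$ with small $b$ and $k$. A genus-$1$ relative trisection diagram consists of three families of curves on a punctured torus, each family being a single curve or empty. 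Running through this short list, the resulting $4$-manifolds are $D^4$, $\CC P^2\setminus B^4$, $\overline{\CC P^2}\setminus B^4$, $S^1\times D^3$, $S^2\times D^2$-type plumbings with boundary $S^3$ or $S^1\times S^2$, and so on. The only simply connected ones with $H_2=\ZZ$ and boundary $L(m,1)$ are the punctured $\pm\CC P^2$, which are $X_{\pm1}(U)$. An alternative route: $\partial X$ is a lens space $L(m,1)$, but a genus-$1$ relative trisection with $b=1$ gives a boundary with open book page of genus $\le 1$ whose monodromy is determined by the diagram, and this restricts $|m|$. Either way, $g(X_m(U))\ge 2$ for $m\ne\pm1$. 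This enumeration of genus-$1$ diagrams is the main obstacle, and I would organize it by the value of $(k,p,b)$ permitted by the Euler characteristic formula.

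\emph{Upper bounds.} For $m=\pm1$, $X_{\pm1}(U)=\pm\CC P^2\setminus \Int B^4$. A genus-$1$ trisection of $\pm\CC P^2$ has central torus curves $\alpha,\beta,\gamma$ of slopes $0,\infty,\pm1$. Removing a neighborhood of a point on the central surface, as in the standard way of removing a $4$-ball, yields a $(1,0;0,1)$-relative trisection. For general $m$, I would produce a genus-$2$ relative trisection diagram. One route is the algorithm of Castro--Gay--Pinz\'on-Caicedo for Kirby diagrams: write $U$ with framing $m$ as the binding-parallel curve of an annular open book on $S^3$ (the Hopf band), with the framing realized as page framing plus Dehn twists. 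Equivalently, $X_m(U)$ is the $D^2$-bundle over $S^2$ of Euler number $m$. It is built from the relative trisection of $D^4$ with annulus page ($g=1$, $p=0$, $b=2$) by adding one $2$-handle along a curve on the page, which raises the genus by one, giving $g=2$. I would write the diagram explicitly on $\Sigma_{2,?}$ with $\alpha$, $\beta$, $\gamma$ families where $\gamma$ differs from $\beta$ by twisting $m$ times along a core curve. I would then check that each pair is standard (a handleslide-equivalent to the standard diagram) and verify that the monodromy gives $L(m,1)$.

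Combining the two parts gives $g=1$ for $m=\pm1$ and $g=2$ otherwise.
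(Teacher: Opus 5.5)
You have the upper bounds essentially as the paper does. $X_{\pm1}(U)=\pm\CC P^2\setminus\Int D^4$ gets a $(1,0;0,1)$-relative trisection, and for general $m$ there is an explicit genus-$2$ diagram in which $\gamma$ is twisted $m$ times along a curve $c$. The paper checks this diagram with the Kim--Miller algorithm. The surface in your sketch must be $\Sigma_{2,2}$, because $\chi=2$ forces every genus-$2$ relative trisection to have type $(2,1;0,2)$.

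The genuine gap is the lower bound $g(X_m(U))\ge 2$ for $m\neq\pm1$. You never pin down the genus-$1$ parameters, and what you do say about them is partly wrong:
\begin{itemize}
\item The admissibility inequalities give only $k\le g+p+b-1=p+b$, not $k\le 1$.
\item You leave open ``$p=0$ with small $b$ and $k$'' and page genus $p\le 1$. These leftover cases are exactly the ones that would defeat the argument. An annulus page, or a $\Sigma_{1,1}$ page (a positively stabilized annular open book), is compatible with boundary $L(m,1)$ for \emph{every} $m$. So your ``alternative route'' through the induced open book cannot restrict $|m|$ unless the page is already known to be a disk.
\item The proposed enumeration of diagrams is not carried out. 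As sketched, it contradicts the statement you are proving. You list $S^2\times D^2$-type manifolds with boundary $S^1\times S^2$ among genus-$1$ outputs. But $S^2\times D^2=X_0(U)$ is simply connected with $H_2=\ZZ$ and boundary $L(0,1)$ in your convention. That would give $g(X_0(U))=1$, contradicting both the proposition and your own claim that only punctured $\pm\CC P^2$ occur.
\end{itemize}

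The missing step is a one-line computation for which you already had every ingredient. With $g=1$ we have $p\le 1$, and $\chi=-3k+3p+2b$.
\begin{itemize}
\item If $p=0$, then $k\in\{b-1,b\}$, giving $\chi=3-b$ or $\chi=-b$.
\item If $p=1$, the inequalities force $k=b+1$, giving $\chi=-b$.
\end{itemize}
Hence $\chi=2$ forces $(g,k;p,b)=(1,0;0,1)$; this is Lemma~\ref{lem:gkpb}. The induced open book on $\partial X$ then has disk pages, so $\partial X\cong S^3$. Since $H_1(S^3_m(U))\cong\ZZ/m$, this happens only for $m=\pm1$. That is exactly the paper's argument, via Theorem~\ref{thm:lb-tg-chi2}, and no classification of genus-$1$ diagrams is needed.
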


The upper bounds for $g(X_m(U))$ were previously given by Castro, Gay, and Pinz\'{o}n-Caicedo~\cite{CasGayPin18_1}.
They constructed an explicit genus-2 relative trisection of $X_m(U)$.
We complete the proof of Proposition~\ref{thm:tg-unknot} by giving the corresponding lower bounds.

The following theorem provides an infinite family of torus knots whose traces have trisection genus $3$, thereby realizing the first part of Theorem~\ref{thm:g=3}.

\begin{restatable}{theorem}{maintorus}\label{thm:tg-Tp,p+1}
Let $m\in\ZZ$ and $p\geq2$ be integers. The trisection genus of the $m$-trace of the $(p,p+1)$-torus knot $T_{p,p+1}$ is given by
\begin{equation*}
g(X_m(T_{p,p+1}))=
	\begin{cases}
		2 & \text{if } m=5 \text{ and } p=2, \\
		3 & \text{otherwise.}
	\end{cases}
\end{equation*}
For the mirror image $-T_{p,p+1}$, we have
\begin{equation*}
g(X_m(-T_{p,p+1}))=
	\begin{cases}
		2 & \text{if } m=-5 \text{ and } p=2, \\
		3 & \text{otherwise.}
	\end{cases}
\end{equation*}
\end{restatable}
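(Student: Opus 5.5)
The proof splits into a lower bound and an upper bound, and the mirror statement comes for free. For the mirror image, $X_m(-K)$ is orientation-reversing diffeomorphic to $X_{-m}(K)$, and trisection genus is insensitive to orientation. So I will only treat $T_{p,p+1}$.

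\textbf{Lower bound.} By Theorem~\ref{thm:g=12}, a knot trace has trisection genus at most $2$ only if $K$ is the unknot (any $m$), or $K$ is the right-handed trefoil with $m=5$, or $K$ is the left-handed trefoil with $m=-5$. For $p\geq 2$, $T_{p,p+1}$ is never the unknot. It is the right-handed trefoil exactly when $p=2$. Hence $g(X_m(T_{p,p+1}))\geq 3$ except when $p=2$ and $m=5$, and in that case Theorem~\ref{thm:g=12} together with its realization gives $g=2$ (I assume the genus-$2$ example of $X_5(T_{2,3})$ is constructed in the proof of Theorem~\ref{thm:g=12}).

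\textbf{Upper bound.} The remaining task is a genus-$3$ relative trisection of $X_m(T_{p,p+1})$ for every $m$ and every $p\geq 2$. I would follow the Castro--Gay--Pinz\'on-Caicedo approach: put the attaching knot of a Kirby diagram in a suitable position with respect to a Heegaard surface of $S^3$ and read off a relative trisection diagram.

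The key observation is that $T_{p,p+1}$ lies on a genus-$1$ Heegaard torus. It is also the closure of a braid, and it is obtained from a simple curve on a genus-$2$ surface by a single Dehn twist. More concretely, $T_{p,p+1}$ is a $(1,1)$-knot, so it can be isotoped to lie on a page of an open book or on a genus-$2$ Heegaard surface with surface framing adjustable by one stabilization.

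So the plan is to exhibit an explicit diagram $(\Sigma_{3,b};\alpha,\beta,\gamma)$ on a genus-$3$ surface with boundary, arranged as follows:
\begin{itemize}
\item $\alpha$ and $\beta$ give a standard diagram of $D^4$, so the $\beta$-to-$\gamma$ step attaches one $2$-handle.
\item The $\gamma$ curves differ from the $\beta$ curves by replacing one curve with a copy of $T_{p,p+1}$ drawn on $\Sigma$.
\item The framing is adjusted to $m$ by twisting along a curve parallel to the boundary.
\end{itemize}
The $p$-dependence enters only through a curve of slope $(p,p+1)$ on a torus summand, and this is why the genus does not grow with $p$.

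\textbf{Verification and main obstacle.} I would then check two things.
\begin{itemize}
\item The pairs $(\alpha,\beta)$, $(\beta,\gamma)$ and $(\gamma,\alpha)$ are each slide-equivalent to standard diagrams. This is done by explicit handle slides, using that the $(p,p+1)$ curve meets a meridian curve once.
\item The resulting $4$-manifold is $X_m(T_{p,p+1})$. This follows by converting the trisection diagram to a Kirby diagram via the algorithm for relative trisections.
\end{itemize}
The main obstacle is arranging that the framing can be set to an \emph{arbitrary} integer $m$ without adding genus. The trisection surface framing of the knot is fixed, and changing it by boundary-parallel twists must not destroy the standardness of $(\gamma,\alpha)$. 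I would first try to place $T_{p,p+1}$ as a curve on a genus-$1$ subsurface of the page of the boundary open book, so that the framing change is realized through the monodromy rather than through extra genus.
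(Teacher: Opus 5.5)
Your mirror reduction and your lower bound are sound. The necessity direction of Theorem~\ref{thm:g=12} uses only Lemma~\ref{lem:gkpb}, the induced planar open book, Property~P, and the Kronheimer--Mrowka--Ozsv\'ath--Szab\'o and Tange results. That is essentially the paper's own route through Theorem~\ref{thm:lb-tg-chi2}, and none of it depends on the statement at hand.

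The gap is in the upper bounds, which are the real content of the theorem. First, for $(p,m)=(2,5)$ you cannot get $g\le 2$ from Theorem~\ref{thm:g=12}. That theorem only lists necessary conditions, and the paper proves its converse \emph{using} the theorem you are proving. You need an explicit $(2,1;0,2)$-diagram of $X_5(T_{2,3})$, as in Proposition~\ref{prop:2102rtd-trefoil}.

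Second, and more seriously, you never produce a genus-$3$ diagram, and the concrete mechanisms you suggest are excluded. Lemma~\ref{lem:gkpb} forces every genus-$3$ relative trisection of a manifold with $\chi=2$ to have type $(3,2;0,3)$. This has three consequences for your plan:
\begin{itemize}
\item Every sector is $\natural^2(S^1\times D^3)$. So $(\Sigma;\alpha,\beta)$ cannot describe $D^4$: a $D^4$ sector means $k=0$, which forces $p=0$, $b=1$, $g=1$. Your closed-case template ``$0$-handle, then one sector carrying the $2$-handle'' does not apply as stated.
\item The boundary open book has planar page $\Sigma_{0,3}$. There is no genus-$1$ subsurface of the page on which to place $T_{p,p+1}$.
\item Your framing adjustments fail. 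Changing the framing by stabilization raises the genus. A Dehn twist along a curve parallel to a component of $\partial\Sigma$ fixes every isotopy class of closed curves in $\Sigma$, so it does not change the diagram, let alone $m$.
\end{itemize}

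What is required is what Proposition~\ref{prop:3203rtd-torus} supplies: an explicit diagram $\calD_{0,0}$ on $\Sigma_{3,3}$ together with two essential curves $c_1,c_2$. The paper shows that $\calD_{k,l}=(\Sigma;\alpha,\beta,\tau_{c_1}^k\circ\tau_{c_2}^l(\gamma))$ remains a $(3,2;0,3)$-diagram for all $k,l$, verified by explicit slides on $(\Sigma;\beta,\gamma)$. By the Kim--Miller algorithm plus handle cancellation, it represents $X_{k^2+3k+l+1}(T_{k+1,k+2})$. With $p=k+1$ this framing is $p(p+1)-1+l$, so letting $l$ run over $\ZZ$ gives every $m$ at fixed genus. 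Without such a family, or some other concrete genus-$3$ construction valid for all $m$ and all $p\geq 2$, the bound $g\le 3$ remains unproved.
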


For the trefoil knots $\pm T_{2,3}$, only the $\pm5$-traces, respectively, have trisection genus 2.
These are precisely the traces of nontrivial knots with trisection genus $2$, as stated in Theorem~\ref{thm:g=12}.
%
Furthermore, Theorem~\ref{thm:tg-Tp,p+1} implies the following corollary.

\begin{corollary}
There exist infinitely many knots $K$ such that the trisection genus $g(X_m(K))$ is constant for all integers $m$.
\end{corollary}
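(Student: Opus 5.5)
The corollary follows directly from Theorem~\ref{thm:tg-Tp,p+1}, once we exclude the trefoil. Take the family of torus knots $K_p = T_{p,p+1}$ with $p \geq 3$. For $p \geq 3$, the exceptional case in Theorem~\ref{thm:tg-Tp,p+1}, namely $m=5$ and $p=2$, never occurs. So for every integer $m$ we get $g(X_m(T_{p,p+1})) = 3$. The mirror images $-T_{p,p+1}$ with $p\ge 3$ work equally well, since the same theorem gives $g(X_m(-T_{p,p+1}))=3$ for all $m$.

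It remains to check that the knots $T_{p,p+1}$, $p \geq 3$, are pairwise distinct, so that the family is genuinely infinite. This is standard: the Seifert genus of $T_{p,p+1}$ is $(p-1)p/2$, which is strictly increasing in $p$. Alternatively, the Alexander polynomials $\frac{(t^{p(p+1)}-1)(t-1)}{(t^p-1)(t^{p+1}-1)}$ have distinct degrees $p(p-1)$.

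There is no real obstacle here. All the content lies in Theorem~\ref{thm:tg-Tp,p+1}, which supplies both the upper bound (explicit genus-3 relative trisections) and the lower bound (ruling out genus $\le 2$ via Theorem~\ref{thm:g=12}, since $T_{p,p+1}$ with $p\ge3$ is neither the unknot nor a trefoil). The only point requiring care is to discard $p=2$, where $m=5$ yields genus $2$ rather than $3$.
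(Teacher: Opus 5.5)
Your proof is correct and is the paper's argument: the corollary is read off from Theorem~\ref{thm:tg-Tp,p+1} by taking $p\ge 3$, where the genus is identically $3$, and your Seifert-genus check supplies the pairwise distinctness that the paper leaves implicit. One small attribution point: the paper obtains the lower bound $g\ge 3$ directly from Theorem~\ref{thm:lb-tg-chi2} together with the Kronheimer--Mrowka--Ozsv\'ath--Szab\'o and Tange classification of $L(a,1)$ surgeries, not from Theorem~\ref{thm:g=12}; your route is still not circular, because the direction of Theorem~\ref{thm:g=12} you use does not depend on Theorem~\ref{thm:tg-Tp,p+1}.
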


We also consider more general torus knots of the form $T_{p,pq+1}$ and give an upper bound for the trisection genus of their traces.

\begin{restatable}{theorem}{maintoruspq}\label{thm:tg-Tp,pq+1}
For any integers $p, q,$ and $m$, the trisection genus of the $m$-trace of the $(p, pq+1)$-torus knot $T_{p, pq+1}$ is at most $4$.
\end{restatable}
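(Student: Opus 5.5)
The plan is to construct an explicit genus-$4$ relative trisection of $X_m(T_{p,pq+1})$, uniformly in $p$, $q$ and $m$. Trivial cases are handled first. If $|p|\le 1$ or $q=0$, or more generally whenever $T_{p,pq+1}$ is the unknot, Proposition~\ref{thm:tg-unknot} already gives genus at most $2$. Up to mirroring, which reverses orientation and sends $m$ to $-m$, and up to the symmetries $T_{p,r}=T_{-p,-r}=T_{r,p}$, I may assume $p\ge 2$ and $q\neq 0$.

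The starting point is a good description of the knot. $T_{p,pq+1}$ is obtained from $T_{p,1}$, the unknot drawn as the closure of the $p$-braid $\sigma_1\cdots\sigma_{p-1}$, by adding $q$ full twists on $p$ strands. Equivalently, $T_{p,pq+1}$ is the image of an unknot $U'$ under $-1/q$ surgery on an unknot $c$ that links $U'$ $p$ times, with $c$ bounding a disk that meets $U'$ in $p$ points. Thus $X_m(T_{p,pq+1})$ has a Kirby diagram built from a two-component link $U'\cup c$. To avoid rational coefficients, I will instead present the knot as a doubly pointed (or bridge-type) diagram on a surface. Here $T_{p,pq+1}$ is a $(1,1)$-knot: it lies on a genus-one Heegaard torus of $S^3$ and is $1$-bridge relative to it, and the $p$-strand twisting is realized by Dehn twists on the torus. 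This is the structure I will exploit.

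The construction is as follows. I will use the standard recipe for building a relative trisection diagram of a $2$-handlebody from a Heegaard-type diagram of the attaching link, as in Castro--Gay--Pinz\'on-Caicedo~\cite{CasGayPin18_1}. Put $K=T_{p,pq+1}$ as a $(1,1)$-knot on a genus-$1$ Heegaard surface $\Sigma_1$ of $S^3$. Stabilize once to make $K$ a core-parallel curve on a genus-$2$ surface, so that $K$ becomes isotopic to a curve $\delta$ lying on the surface with surface framing differing from $m$ by an integer. That integer is then corrected by Dehn twists of $\delta$ around a parallel curve. The CGP algorithm then produces a relative trisection whose genus is
\[
g_{\text{Heegaard surface}}+(\text{number of }2\text{-handles})+(\text{stabilizations needed to make the handle curve a core}).
\]
I aim for a total of $2+1+1=4$ with page genus small. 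Concretely, I will write down three cut systems $(\alpha,\beta,\gamma)$ on a genus-$4$ surface with boundary: $\alpha$ and $\beta$ come from the genus-$2$ Heegaard splitting of $S^3$ adapted to $K$, while $\gamma$ is obtained from $\beta$ by replacing the curve dual to the knot with $\delta$ twisted $m$ times. After that I check the three pairwise standardness conditions.

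The main obstacle is verifying that $(\alpha,\gamma)$ and $(\beta,\gamma)$ are handleslide-diffeomorphic to standard diagrams for every $q$ and $m$ simultaneously. The $\alpha$--$\beta$ pair is standard by construction and the $\beta$--$\gamma$ pair differs by one curve. The $\alpha$--$\gamma$ pair, however, must represent a connected sum of $S^1\times S^2$'s with the correct page, and the $q$ full twists make the curves complicated. My first attempt is to arrange the $q$ full twists as a power $\tau_c^{\,q}$ of a Dehn twist about a single curve $c$ on the genus-$4$ surface that is disjoint from all $\alpha$ curves. Then $\alpha$--$\gamma$ standardness reduces to the $q=0$ (unknot) case, which is the CGP genus-$2$ picture stabilized twice. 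If $c$ cannot be made disjoint from $\alpha$, I fall back on using the $1/q$-surgered unknot $c$ as an extra $2$-handle together with a canceling handle, and check that this still fits within genus $4$.
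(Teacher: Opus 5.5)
There is a genuine gap. You never produce a diagram, and the scheme you outline cannot give a genus-$4$ relative trisection of the trace.

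Since $\chi(X_m(K))=2$ and $\partial X_m(K)\not\cong S^3$ for your nontrivial $K=T_{p,pq+1}$ (Gordon--Luecke), Lemma~\ref{lem:gkpb} forces the type to be $(4,3;0,4)$ or $(4,2;1,1)$. So each of $\alpha,\beta,\gamma$ must consist of $4$ (resp.\ $3$) curves cutting $\Sigma$ down to a page $\Sigma_{0,4}$ (resp.\ $\Sigma_{1,1}$) of the open book induced on $S^3_m(K)$. A genus-$2$ Heegaard splitting of $S^3$ supplies neither the right number of curves nor a page, so your claim that $(\alpha,\beta)$ is standard by construction has nothing to rest on.

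More seriously, in both types $k=2p+b-1$. Hence the standard model of every pair consists only of geometrically dual pairs, and the union of the two compression bodies is $P\times I$, a handlebody. If $\gamma$ is $\beta$ with one curve replaced, then $\beta$ and $\gamma$ share a curve that is nonseparating in $\Sigma$, since it lies in a cut system. Its two compressing disks form a nonseparating $2$-sphere in $C_\beta\cup_\Sigma C_\gamma$, which is impossible in a handlebody. So the pair you regard as easy, $(\Sigma;\beta,\gamma)$, can never be standard in your construction. The ``replace the curve dual to the $2$-handle'' pattern comes from the closed setting, where shared curves account for the $1$-handles of a sector; here all $k$ of them come from the page.

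Likewise, the count $2+1+1=4$ is not a theorem. By Proposition~\ref{prop:Euler-rel} the genus is determined by $\chi$, $k$ and the page. You never exhibit the page that, by Lemma~\ref{lem:obd}, the trisection must induce on $S^3_m(K)$, and nothing in a $(1,1)$-presentation of $K$ supplies one.

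Beyond this, the decisive steps remain intentions: where the twisting curve lies, how every framing $m$ is realized while keeping all three pairs standard, and, crucially, how you identify the trisected manifold with $X_m(T_{p,pq+1})$.

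Your fallback also fails. A $-1/q$ surgery on the unknot $c$ only realizes a diffeomorphism of $S^3$ (a Rolfsen twist); it is not a handle attachment. For $q=\pm1$, attaching a $\mp1$-framed $2$-handle along $c$ gives a blow-up of the trace rather than the trace. That extra handle cannot be cancelled, because $c$ runs over no $1$-handle.

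By contrast, the paper writes down one explicit $(4,3;0,4)$ diagram $\calD_{0,0,0}$ and obtains $\calD_{j,k,l}$ by applying $\tau_{c_1}^j\circ\tau_{c_2}^k\circ\tau_{c_3}^l$ to $\gamma$ alone.
\begin{itemize}
\item The pairs $(\Sigma;\alpha,\beta)$ and $(\Sigma;\gamma,\alpha)$ are the easy ones, roughly as your first attempt anticipates.
\item The hard pair $(\Sigma;\beta,\gamma)$ is handled by explicit moves.
\item The Kim--Miller algorithm plus Kirby calculus identifies the result as the $\bigl((j-2)k^2+(2j-3)k+j-2+l\bigr)$-trace of $T_{k+1,(j-2)(k+1)+1}$.
\end{itemize}
Thus both $p=k+1$ and $q=j-2$ are twist parameters, and the free parameter $l$ realizes every framing $m$.
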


However, we have not been able to find any of these torus knots $T_{p,pq+1}$ whose traces have trisection genus exactly $4$.
The following theorem shows the existence of knot traces with trisection genus 4.

\begin{restatable}{theorem}{maingfour}\label{thm:g=4}
There exist infinitely many hyperbolic knots $K$ such that $g(X_m(K))=4$ for all but finitely many integers $m$.
Furthermore, for any hyperbolic knot $K$, the inequality $g(X_m(K)) \geq 4$ holds for all but finitely many integers $m$.
\end{restatable}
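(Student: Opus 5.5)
We argue in two parts, lower bound and upper bound, and combine them at the end.

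\textbf{Lower bound.} The second sentence of the theorem follows from Theorems~\ref{thm:g=12} and~\ref{thm:g=3}. Suppose $g(X_m(K))\le 3$. Then by Theorem~\ref{thm:g=12} either $K$ is an unknot or a trefoil, or $g(X_m(K))=3$. In the latter case Theorem~\ref{thm:g=3} shows that $S^3_m(K)$ is homeomorphic to some $M(a_1,a_2,a_3)$. By Remark~\ref{rem:Mabc}, each such manifold is a Seifert fibered space with at most three singular fibers or a connected sum of two lens spaces, and in every case it is non-hyperbolic.

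Now let $K$ be hyperbolic. It is not the unknot or a trefoil, so every $m$ with $g(X_m(K))\le 3$ must be an exceptional surgery slope. By Thurston's hyperbolic Dehn surgery theorem there are only finitely many such slopes; the sharper bound of Lackenby--Meyerhoff, at most $10$, is not needed. Hence $g(X_m(K))\ge 4$ for all but finitely many $m$.

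\textbf{Upper bound.} To obtain infinitely many hyperbolic knots with $g(X_m(K))=4$ for almost all $m$, we need infinitely many hyperbolic knots $K$ with $g(X_m(K))\le 4$ for \emph{every} $m$. Our plan is to adapt the construction behind Theorem~\ref{thm:tg-Tp,pq+1}. That construction should come from placing a knot on a genus-$2$ surface, or equivalently a doubly pointed diagram, inside a relative trisection diagram of $D^4$ or of $X_m(U)$, so that one extra stabilization yields a genus-$4$ relative trisection diagram of $X_m(K)$. The framing is changed by twisting along a curve, which keeps the genus fixed.

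For the hyperbolic family we would take twisted torus knots, or more generally knots with tunnel number one that lie on a genus-$2$ Heegaard surface of $S^3$ as simple closed curves with surface framing adjustable by Dehn twists. Concrete candidates are the $(-2,3,2n+1)$-pretzel knots, which are hyperbolic for $n\ge 3$, or the twist knots. For each such knot, the Castro--Gay--Pinz\'on-Caicedo algorithm for attaching a $2$-handle along a curve on the page of a relative trisection should produce a genus-$4$ relative trisection uniformly in $m$.

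\textbf{Main obstacle.} The hardest step is verifying that the genus does not exceed $4$ after the handle is added. A naive application of the algorithm adds genus to make the attaching curve a page curve. We would first try to show that for these tunnel-number-one, genus-one-fibered-type knots, the knot already lies on the page of a genus-$3$ relative trisection of $D^4$ whose page has the appropriate boundary. Then attaching the $2$-handle costs exactly one more, giving $4$. Hyperbolicity of the chosen infinite family is standard from the literature. Combining this upper bound with the lower bound above proves the theorem.
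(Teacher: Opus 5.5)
Your lower-bound half is correct and is essentially the paper's argument. A hyperbolic $S^3_m(K)$ is not $S^3$, not a lens space, and not any $M(a_1,a_2,a_3)$. Theorem~\ref{thm:lb-tg-chi2} together with Thurston's hyperbolic Dehn surgery theorem therefore gives $g(X_m(K))\ge 4$ for all but finitely many $m$. Your detour through Theorems~\ref{thm:g=12} and~\ref{thm:g=3}, and hence through Property~P and the lens space surgery results, is unnecessary but harmless.

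The gap is the upper bound, which is the real content of the first sentence. You need, for infinitely many distinct hyperbolic knots $K$, an actual genus-$4$ relative trisection of $X_m(K)$ for all but finitely many $m$. The proposal never produces one: it only says the algorithm ``should produce'' it, and that you ``would first try to show'' the knot lies on a page of a genus-$3$ relative trisection of $D^4$. None of the ingredients this plan needs is established:
\begin{itemize}
\item[(a)] A specific infinite family of hyperbolic knots lying on a page of the open book induced by a genus-$3$ relative trisection of $D^4$. Since $\chi(D^4)=1$, such a trisection has type $(3,1;0,1)$, $(3,2;1,1)$ or $(3,3;0,4)$, so the page is a disk, a once-punctured torus, or a four-holed sphere. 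That is a strong condition. Lying on a genus-$2$ Heegaard surface, or having tunnel number one, is a different property and does not put $K$ on such a page.
\item[(b)] A lemma, which you neither prove nor cite, that attaching a $2$-handle along a page curve raises $g$ by exactly one while keeping $(k;p,b)$.
\item[(c)] A proof that every framing, not just the page framing, can be reached by twisting $\gamma$ along a fixed curve while keeping $(\Sigma;\beta,\gamma)$ and $(\Sigma;\gamma,\alpha)$ standard. In the paper this is a separate, nontrivial check.
\end{itemize}

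The paper bypasses all of this with an explicit family. Proposition~\ref{prop:4304rtd-pretzel} gives $(4,3;0,4)$-relative trisection diagrams $\calD_{k,l}$ of $X_{k+4l+4}(P(-2,3,2l-1))$. These are verified by slides and surface diffeomorphisms, and identified with the trace via the Kim--Miller algorithm. For fixed $l$, varying $k$ realizes every framing, and for $l\notin\{1,2,3\}$ the knots are mutually distinct and hyperbolic. Without such a construction, or a proved general lemma plus a verified family, your argument establishes only the second sentence of the theorem.
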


In fact, the second statement of this theorem implies that the trisection genus of $X_m(K)$ is at least $4$ if the boundary $3$-manifold is hyperbolic.
As examples satisfying the first statement of Theorem~\ref{thm:g=4}, we consider the $(-2,3,2n-1)$-pretzel knots (see Figure~\ref{fig:K_n}), denoted by $P(-2,3,2n-1)$.
It is known that $P(-2,3,2n-1)$ is a hyperbolic knot if $n \notin \{1,2,3\}$ (see e.g. \cite[Section~2.3]{Kaw96}).

\begin{restatable}{theorem}{mainpretzel}\label{thm:pretzel}
The following properties hold:
\begin{itemize}
\item
For any $m,n\in\ZZ$, the trisection genus of $X_m(P(-2,3,2n-1))$ is at most $4$.
\item
If $n \notin \{1,2,3\}$, then the trisection genus of $X_m(P(-2,3,2n-1))$ is equal to $4$ for all but finitely many integers $m$.
\item
In the case $n=2$, the trisection genus of $X_m(P(-2,3,3))$ is $3$ for every integer $m$.
\end{itemize}
\end{restatable}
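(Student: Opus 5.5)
The proof splits into the three items of Theorem~\ref{thm:pretzel}, and I would treat the upper bounds by explicit construction and the lower bounds by the results stated earlier.

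\textbf{Upper bound.} I would build an explicit genus-$4$ relative trisection diagram of $X_m(P(-2,3,2n-1))$ directly, rather than through a general inequality. Start from a diagram of the knot in bridge or plat position adapted to the pretzel structure. The $(-2)$- and $3$-twist boxes, together with the variable $(2n-1)$-twist box, should be representable by Dehn twists on a genus-$4$ surface with boundary. Changing $n$ then amounts to applying powers of a fixed Dehn twist to one family of curves, and changing $m$ amounts to twisting along a curve parallel to the knot curve. This gives a uniform family of diagrams $(\Sigma_{4,b};\alpha,\beta,\gamma)$. Following the method of Castro--Gay--Pinz\'on-Caicedo used for $X_m(U)$ and for $T_{p,pq+1}$ in Theorem~\ref{thm:tg-Tp,pq+1}, I would then check two things. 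First, each pair of curve systems is handle-slide equivalent to a standard pair, so the three pieces are $1$-handlebodies. Second, the resulting Kirby diagram is the $m$-framed pretzel knot. The twist boxes should be absorbed by slides, so this verification works for all $n$ at once.

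\textbf{Generic case $n\notin\{1,2,3\}$.} Here $P(-2,3,2n-1)$ is hyperbolic (\cite{Kaw96}). The second statement of Theorem~\ref{thm:g=4} then gives $g\ge 4$ for all but finitely many $m$, and combined with the upper bound this yields equality.

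\textbf{The case $n=2$.} The knot $P(-2,3,3)$ is a torus knot; I believe it is $T_{3,4}$ or its mirror, with the sign fixed by checking orientation conventions. Theorem~\ref{thm:tg-Tp,p+1} with $p=3$ then gives genus $3$ for every $m$. If the identification turns out to be $T_{3,5}=T_{p,pq+1}$ instead, I would need a separate genus-$3$ diagram together with the lower bound from Theorem~\ref{thm:g=12}. The main obstacle is the upper-bound construction: making the genus-$4$ diagram uniform in both $n$ and $m$, and verifying the handleslide equivalences to standard diagrams.
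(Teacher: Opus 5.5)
Your three-part structure is the paper's, and the last two parts are fine. For $n\notin\{1,2,3\}$, invoking the second half of Theorem~\ref{thm:g=4} is not circular. Only its first half depends on the present theorem; the second half is Thurston's hyperbolic Dehn surgery theorem combined with Theorem~\ref{thm:lb-tg-chi2}, which is exactly the paper's argument here. For $n=2$, $P(-2,3,3)=T_{3,4}$, and the chirality question does not matter: Theorem~\ref{thm:tg-Tp,p+1} gives genus $3$ for every trace of both $T_{3,4}$ and $-T_{3,4}$. Your $T_{3,5}$ fallback is unnecessary. It also could not have been carried out with the paper's results, since $T_{3,5}=P(-2,3,5)$ is precisely the case $n=3$ that the statement excludes, and no genus-$3$ diagram is available for it.

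The real gap is the first item, which carries all the content and which you describe but do not establish. A sentence like ``should be representable by Dehn twists on a genus-$4$ surface'' is the claim itself. Nothing in your outline explains why a construction from a bridge or plat presentation would land in genus exactly $4$, rather than a genus depending on the diagram. Nor does it explain why the standardness check would go through uniformly in $n$.

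The paper closes this with Proposition~\ref{prop:4304rtd-pretzel}. It gives a specific $(4,3;0,4)$-diagram $\calD_{0,0}=(\Sigma;\alpha,\beta,\gamma)$ and two curves $c_1,c_2$ such that $\calD_{k,l}=(\Sigma;\alpha,\beta,\tau_{c_1}^k\circ\tau_{c_2}^l(\gamma))$ represents the $(k+4l+4)$-trace of $P(-2,3,2l-1)$. Taking $l=n$ and $k=m-4n-4$ realizes every $X_m(P(-2,3,2n-1))$. Note that twisting along $c_2$ shifts the framing as well as the twist region, so the bookkeeping is not simply one twist curve for $m$.

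Since only $\gamma$ is changed, the substantive verification is that $(\Sigma;\beta,\gamma)$ is slide-diffeomorphic to the standard diagram for all $k,l$; the paper does this by explicit moves. The $4$-manifold is then identified via the Kim--Miller algorithm followed by handle moves. Until such a family is exhibited, your first bullet is unproved. Your second bullet then only yields $g(X_m(P(-2,3,2n-1)))\geq 4$, not equality.
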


The second statement means that the trisection genus can be determined to be $4$ whenever the boundary $S^3_m(P(-2,3,2n-1))$ is not Seifert fibered.
When $n=2$, since $P(-2,3,3)=T_{3,4}$, the third statement follows from Theorem~\ref{thm:tg-Tp,p+1}.
In the cases $n=1,3$ (i.e., $P(-2,3,1)=T_{5,2}$ and $P(-2,3,5)=T_{3,5}$), it remains unclear whether the trisection genus of the $m$-trace is $4$ for most integers $m$.

Regarding the figure-eight knot $4_1$, we obtain the next result, which also gives examples of knot traces with trisection genus $4$.

\begin{restatable}{theorem}{mainfigureeight}\label{thm:tg-fig8}
If $m\notin\{\pm1,\pm2,\pm3\}$, then $g(X_m(4_1))=4$. If $m\in\{\pm1,\pm2,\pm3\}$, then $g(X_m(4_1))$ is either $3$ or $4$.
%
%
\end{restatable}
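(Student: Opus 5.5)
The plan is to prove an upper bound of $4$ for every $m$ and a lower bound of $4$ for $m\notin\{\pm1,\pm2,\pm3\}$. For the remaining six values we separately show $g\geq 3$.

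\emph{Upper bound.} We will build an explicit genus-$4$ relative trisection diagram of $X_m(4_1)$ for arbitrary $m$. We start from a relative trisection of the $m$-trace of a knot presented as a $(1,1)$-knot or as a two-bridge knot; the figure-eight knot is the two-bridge knot $S(5,2)$. The construction is the one used for Theorem~\ref{thm:tg-Tp,pq+1} and for the first item of Theorem~\ref{thm:pretzel}. We put $4_1$ in a genus-$1$ bridge position and realize the $2$-handle by stabilizing a genus-$0$ relative trisection of $D^4$ along the arcs of the knot. The framing $m$ enters only through a twist of one curve along another in the diagram, so the genus stays at $4$ for every $m$. Alternatively, $4_1$ and $P(-2,3,2n-1)$ are both twisted-torus-type knots obtainable from a common link by twisting. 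If the genus-$4$ diagram for the pretzel knots comes from a diagram in which the twisting parameter appears as Dehn twists, the same template, with a different twist region, gives $4_1$ with genus $4$.

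\emph{Lower bound for most $m$.} We use the genus-$3$ classification in Theorem~\ref{thm:g=3} together with Theorem~\ref{thm:g=12}. The knot $4_1$ is hyperbolic, and its exceptional surgeries are exactly $S^3_m(4_1)$ with $m\in\{0,\pm1,\pm2,\pm3,\pm4\}$ (Thurston). Hence for $|m|\geq 5$ the boundary is hyperbolic and cannot be $M(a_1,a_2,a_3)$, which is Seifert fibered or a connected sum of lens spaces. Since $4_1$ is neither the unknot nor a trefoil, Theorem~\ref{thm:g=12} excludes genus $\leq2$. So $g\geq4$ for $|m|\geq5$. The cases $m=0,\pm4$ need more care, because these boundaries are toroidal graph manifolds. $S^3_0(4_1)$ is a torus bundle, and $S^3_{\pm4}(4_1)$ contains an incompressible torus. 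We must show that none of these is homeomorphic to any $M(a_1,a_2,a_3)$. Every $M(a_1,a_2,a_3)$ is either Seifert fibered with at most three singular fibers over $S^2$ or a connected sum of lens spaces; in both cases it is atoroidal or Seifert fibered. $S^3_{\pm4}(4_1)$ is a union of a twisted $I$-bundle over the Klein bottle and a trefoil-type Seifert piece, so it is a non-Seifert graph manifold. $S^3_0(4_1)$ is an Anosov torus bundle, hence Sol and not Seifert. Thus neither is an $M(a_1,a_2,a_3)$, and $g\geq4$ for $m\notin\{\pm1,\pm2,\pm3\}$.

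\emph{The remaining cases.} For $m\in\{\pm1,\pm2,\pm3\}$, the surgeries $S^3_m(4_1)$ are small Seifert fibered spaces with three singular fibers, so Theorem~\ref{thm:g=3} does not exclude genus $3$. Theorem~\ref{thm:g=12} still gives $g\geq3$, and with the upper bound this yields $g\in\{3,4\}$.

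The main obstacle is the explicit genus-$4$ diagram valid for all $m$, in particular checking that it satisfies the relative trisection conditions and that its boundary open book data is correct. I would first try to adapt the pretzel-knot construction. Identifying the toroidal cases $m=0,\pm4$ is standard but must be cited carefully.
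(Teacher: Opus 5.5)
\textbf{The gap: the upper bound.} Your proposal never proves $g(X_m(4_1))\le 4$ for all $m$, and both assertions of the statement depend on it. Without it you only have $g\ge 4$ for $m\notin\{\pm1,\pm2,\pm3\}$ and $g\ge 3$ for the other six values. You describe a strategy and then call the construction itself the main obstacle, and neither route you sketch closes it.

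\emph{Citing the earlier constructions.} The diagrams behind Theorems~\ref{thm:tg-Tp,pq+1} and~\ref{thm:pretzel} are explicit diagrams for the families $T_{p,pq+1}$ and $P(-2,3,2n-1)$. They are not a recipe for two-bridge or $(1,1)$-knots. Moreover, $4_1$ belongs to neither family. It is hyperbolic, so it is not a torus knot. Also $\det P(-2,3,2n-1)=|2n-7|$ equals $5$ only for $T_{2,5}$ and $P(-2,3,11)$, neither of which is $4_1$. So ``the same template with a different twist region'' is a conjecture, not an argument.

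\emph{Stabilizing along a $(1,1)$-position.} The idea of stabilizing a genus-$0$ relative trisection of $D^4$ along the arcs of a $(1,1)$-position names no surface or curves and does not compute the resulting $(g,k;p,b)$. It also gives no reason why every framing is reached without raising the genus. Putting the framing in as a power $\tau_c^m$ of a Dehn twist on one curve family works only if you check that the pairs involving $\tau_c^m(\gamma)$ stay standard for every $m$. You must also check that the 4-manifold changes by exactly that framing.

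Since knot traces can have arbitrarily large trisection genus (Theorem~\ref{thm:g>N}), no argument valid for all knots can give the bound $4$. It has to come from a concrete diagram for $4_1$. This is what the paper supplies in Proposition~\ref{prop:rtd-4_1}:
\begin{itemize}
\item an explicit $(4,3;0,4)$-diagram $\calD_m=(\Sigma;\alpha,\beta,\tau_c^m(\gamma))$;
\item the pairs $(\alpha,\beta)$ and $(\gamma,\alpha)$ are easily standard, and $(\beta,\tau_c^m(\gamma))$ is brought to the standard diagram by explicit slides and surface diffeomorphisms;
\item the Kim--Miller algorithm plus handle moves identify the trisected manifold with $X_{m-3}(4_1)$.
\end{itemize}
Letting $m$ range over $\ZZ$ then covers every framing.

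\textbf{The lower bound is fine.} It is essentially the paper's argument. The paper applies Theorem~\ref{thm:lb-tg-chi2} directly to the list of exceptional surgeries on $4_1$, and routing through Theorems~\ref{thm:g=12} and~\ref{thm:g=3} amounts to the same thing. Your treatment of $m=0,\pm4$ uses the right criterion: $S^3_0(4_1)$ is a Sol torus bundle and $S^3_{\pm4}(4_1)$ is a non-Seifert graph manifold. Since $M(a_1,a_2,a_3)$ is Seifert fibered or a connected sum of lens spaces, what must be excluded is a Seifert fibration. Toroidality alone would need an extra argument, because small Seifert fibered spaces over $S^2$ can be toroidal (e.g.\ Euclidean torus bundles).
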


The boundary $S^3_{m}(4_1)$ is a Seifert fibered space if $m \in \{\pm 1, \pm 2, \pm 3\}$.
In these cases, $X_m(4_1)$ could potentially admit a genus-$3$ relative trisection (see Remark~\ref{rem:4_1Seifert}).
However, we have not been able to show the existence or non-existence of such a trisection.

Finally, we show that the trisection genus of a knot trace can be arbitrarily large.

\begin{restatable}{theorem}{mainlargen}\label{thm:g>N}
For any positive integer $N$, there exists a knot $K$ such that $g(X_{m}(K)) > N$ for infinitely many integers $m$.
\end{restatable}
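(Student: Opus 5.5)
The plan is to pass from the trisection genus of $X_m(K)$ to an invariant of its boundary that can be made arbitrarily large. A $(g,k;p,b)$-relative trisection of $X$ induces an open book decomposition of $\partial X$ with page $\Sigma_{p,b}$. It also induces a Heegaard splitting of $\partial X$, obtained by gluing two compression bodies; this splitting has genus at most roughly $g+p+b$ (standard for relative trisections: $\partial X$ gets a Heegaard splitting of genus $2p+b-1$ from the open book). In addition, $p \le g$ and $b \ge 1$. Because $X_m(K)$ is simply connected with $H_2=\ZZ$, the parameters satisfy $\chi(X)=2=1-(\ldots)$. This forces a relation of the form $g - k + p + b - 1 = \ldots$, which bounds $b$ in terms of $g$.

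Combining these facts, the first step is to establish a linear inequality
\[
h(S^3_m(K)) \le C\, g(X_m(K)) + C',
\]
where $h$ denotes Heegaard genus (equivalently, we could use the rank of $\pi_1$). Presumably this inequality, or a form of it, already underlies the earlier lower bounds in the paper, such as the constraint in Theorem~\ref{thm:g=3} forcing $M(a_1,a_2,a_3)$. I will take it as the key input.

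The second step is to produce knots whose surgeries have large Heegaard genus for infinitely many $m$. Choose a hyperbolic knot $K$ whose exterior $E(K)$ has Heegaard genus (tunnel number plus one) greater than $C N + C'$. Such knots exist: connected sums of many hyperbolic knots have large tunnel number, but we want hyperbolicity so that Dehn filling behaves well. Alternatively, take knots whose exteriors have large rank of $\pi_1$, for instance by using $\mathrm{rank}\,H_1$ of suitable branched covers.

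Next, I will invoke the theorem that Heegaard genus is preserved under all but finitely many Dehn fillings of a hyperbolic manifold, following Moriah--Rubinstein and Rieck--Sedgwick. It gives $h(S^3_m(K)) = h(E(K))$ for all but finitely many slopes, and in particular for infinitely many integers $m$. Therefore $g(X_m(K)) \ge (h(E(K)) - C')/C > N$ for those $m$.

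The main obstacle is the first step, namely getting a clean bound of the boundary's Heegaard genus in terms of $g$ alone, independent of $b$ and $p$. If the Euler characteristic argument does not cap $b$, I would instead construct the Heegaard splitting of $\partial X$ directly. The idea is to split the boundary into the three pieces $\partial X \cap X_i$, each of which is a compression body built on the page. Then we bound the genus by $g$ plus a constant, using simple connectivity to control $b$.

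Another route is to use rank instead of Heegaard genus. Here $\pi_1(\partial X_m(K))$ is generated by the images of $\pi_1$ of a page together with a bounded number of extra elements, and $\mathrm{rank}(\pi_1(\text{page})) = 2p+b-1$. This route works provided the rank-to-Heegaard-genus comparison survives the Dehn filling step, for which I would use a known result that rank is also generically preserved under Dehn filling.
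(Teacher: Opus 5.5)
Your two-step structure (bound the Heegaard genus of $S^3_m(K)$ by a linear function of $g(X_m(K))$, then use knots whose surgeries have large Heegaard genus) is sound. Your second step is a legitimate alternative to the paper's. A hyperbolic knot of tunnel number at least $N$, combined with persistence of Heegaard genus under all but finitely many fillings (Moriah--Rubinstein, Rieck--Sedgwick), gives large $h(S^3_m(K))$ for all but finitely many $m$. That is a stronger conclusion than the paper's. The paper instead cites Boileau--Lustig--Moriah to get $h(S^3_m(K))=r$ directly for every even $m$ for $P(3,\ldots,3)$ with $r$ strands, and needs no Dehn filling theory.

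The genuine gap is your first step, which is the actual content of the theorem and which you leave as an assumed ``key input''. You never prove a bound of $h(\partial X)$ in terms of $g$. The relation $g-k+p+b-1=\ldots$ you write is not the Euler characteristic formula, and you are unsure whether $b$ is capped at all. Without controlling $b$ and $p$, the bound $h(\partial X)\le 2p+b-1$ gives nothing. For example, genus-$0$ relative trisections of $\natural^{b-1}(S^1\times D^3)$ exist for every $b$, and their boundaries $\#^{b-1}S^1\times S^2$ have Heegaard genus $b-1$. Your fallbacks (assembling a splitting from the pieces $\partial X\cap X_i$, or passing to rank via an unidentified rank-persistence theorem) are not carried out.

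The missing computation is short. By Proposition~\ref{prop:Euler-rel}, $\chi(X)=g-3k+3p+2b-1$, and the definition of a $(g,k;p,b)$-relative trisection gives the \emph{lower} bound $k\ge 2p+b-1$. With $\chi(X_m(K))=2$,
\[
3(2p+b-1)\le 3k=g+3p+2b-3,\qquad\text{so}\qquad 3p+b\le g .
\]
It is this inequality on $k$, together with the value of $\chi$, that caps $b$; simple connectivity as such does not. The Euler characteristic alone would not suffice either, since $k$ could grow with $b$. Hence the Heegaard splitting of $\partial X$ coming from the open book with page $\Sigma_{p,b}$ (Lemma~\ref{lem:obd}) has genus $2p+b-1\le g-p-1\le g-1$. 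That is, $h(S^3_m(K))\le g(X_m(K))-1$, so $C=1$ and $C'=-1$ in your notation. With this inequality in hand, either your choice of knots or the paper's completes the proof.
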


As an example of a knot that satisfies this theorem, we can take the $(3,3,\ldots,3)$-pretzel knot with an odd number of strands at least $N$ (see Figure~\ref{fig:pretzel333}).

To determine the trisection genus of knot traces, we use two complementary approaches.
One is to obtain lower bounds of the trisection genus.
For this purpose, in Section~\ref{sect:lower}, we study lower bounds for the trisection genus arising from properties of the boundary $3$-manifolds.
The other approach is to give upper bounds by explicitly constructing low genus relative trisections of knot traces. This is carried out in Section~\ref{sect:determining}.
Finally, in Section~\ref{sect:proofmain}, we complete the proofs of the main results.

\subsection*{Notation and conventions}

Throughout this paper, all manifolds and surfaces are assumed to be compact, connected, oriented, and smooth.
If two smooth manifolds $X$ and $Y$ are orientation-preserving diffeomorphic, then we write $X \cong Y$.
We denote the Euler characteristic of a manifold $X$ by $\chi(X)$.
Let $\Sigma_{g,b}$ denote a surface of genus $g$ with $b$ boundary components.
We define the lens space $L(p, q)$ as the $3$-manifold obtained by $-p/q$-surgery on the unknot $U$, that is, $L(p, q):=S^3_{-p/q}(U)$.

\section{Relative trisections}

In this section, we give a brief overview of some properties of relative trisections. 
A relative trisection was introduced by Gay and Kirby~\cite{GayKir16} for $4$-manifolds with boundary.
For a precise definition, we refer the reader to \cite{CasGayPin18_1}.
Here, we briefly recall several key properties.

Let $g, k, p,$ and $b$ be integers satisfying the inequalities $g, k, p \geq 0$, $b \geq 1$, and $2p+b-1 \leq k \leq g+p+b-1$.
For a compact, connected, oriented, smooth $4$-manifold $X$ with non-empty connected boundary, if a decomposition $X = X_1 \cup X_2 \cup X_3$ is a $(g, k; p, b)$-relative trisection, then the following properties hold:
\begin{itemize}
%
%
\item
For each $i\in\{1,2,3\}$, the sector $X_i$ is diffeomorphic to the genus-$k$ $4$-dimensional $1$-handlebody $\natural^{k}(S^1\times D^3)$.
\item
For each $i,j\in\{1,2,3\}$ such that $i\neq j$, the double intersection $X_i\cap X_j(=\partial{X_i}\cap\partial{X_j})$ is diffeomorphic to a compression body from $\Sigma_{g,b}$ to $\Sigma_{p,b}$.
This compression body is also diffeomorphic to the $3$-dimensional $1$-handlebody $\natural^{g+p+b-1}(S^1\times D^2)$.
\item
The triple intersection $X_1\cap X_2\cap X_3$ is diffeomorphic to the genus-$g$ surface $\Sigma_{g,b}$ with $b$ boundary components.
\end{itemize}

A trisection gives rise to natural invariants.
The \textit{genus} of a (relative) trisection $X_1 \cup X_2 \cup X_3$ is defined as the genus of the triple intersection surface $X_1 \cap X_2 \cap X_3$.
For a $4$-manifold $X$, the \textit{trisection genus} $g(X)$ is defined as the minimum genus of the triple intersection surface among all (relative) trisections of $X$.
It is known that the trisection genus is a diffeomorphism invariant of smooth $4$-manifolds.

We collect here several properties of relative trisections used in this paper.

\begin{lemma}[{\cite[Lemma~17]{GayKir16}}, see also {\cite[Lemma~11]{CasGayPin18_1}}]\label{lem:obd}
A $(g,k;p,b)$-relative trisection $X=X_1\cup X_2\cup X_3$ of a $4$-manifold $X$ induces an open book decomposition on $\partial{X}$ with page $\Sigma_{p,b}$.
%
\end{lemma}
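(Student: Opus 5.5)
The plan is to build the open book explicitly from the pieces of the trisection, following Gay--Kirby. Write the triple intersection as $\Sigma=\Sigma_{g,b}$, and the double intersections as the compression bodies $H_{ij}=X_i\cap X_j$, each going from $\Sigma_{g,b}$ to $\Sigma_{p,b}$. The boundary of each compression body consists of three parts: the ``top'' copy of $\Sigma_{g,b}$, the ``bottom'' copy $P_{ij}\cong\Sigma_{p,b}$, and the vertical part $\partial\Sigma\times I$. Accordingly, $\partial X$ splits as $\partial X = (\partial X_1\cap\partial X)\cup(\partial X_2\cap\partial X)\cup(\partial X_3\cap\partial X)$. The first thing to verify, from the local model in the definition of a relative trisection (see \cite{CasGayPin18_1}), is the following: each piece $\partial X_i\cap\partial X$ is diffeomorphic to $P\times[0,1]$ with $P\cong\Sigma_{p,b}$, glued to its neighbours along the pages $P_{ij}$, and the $\partial P\times[0,1]$ directions all collapse onto the binding $\partial\Sigma$.

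The key steps, in order, are as follows.

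First, use the local model near $\partial X$. A neighbourhood of $\partial X$ in $X_i$ looks like $P\times D_i$, where $D_i$ is one third of a disk sector in the model $D^2$ decomposed into three sectors. Alternatively, use the fact that $X_i\cong \natural^k(S^1\times D^3)$ restricted to the boundary is a product $P\times I$, because the boundary part of $X_i$ is $Z_k\cap\partial X$ and is built as $P\times I$ by construction.

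Second, glue the three products $P\times I_1$, $P\times I_2$, $P\times I_3$ cyclically along their ends. This yields a mapping torus $P\times[0,3]/(x,3)\sim(\mu(x),0)$, where the monodromy $\mu$ is the composite of the three identifications of the pages $P_{12}, P_{23}, P_{31}$.

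Third, check near $\partial\Sigma$ that the vertical pieces $\partial\Sigma\times I$ in each sector shrink to the binding. Concretely, the model there is $\partial\Sigma\times D^2$, with $D^2$ divided radially into three sectors. This gives $\partial X\cong M_\mu\cup_{\partial} (\partial P\times D^2)$, which is an open book with page $\Sigma_{p,b}$ and binding $\partial\Sigma$.

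The main obstacle I expect is the first step: justifying, purely from the abstract definition, that $\partial X_i\cap\partial X$ is a product $P\times I$ rather than some more general cobordism between the pages. I would handle it by appealing directly to the standard model pieces $Z_k=\natural^k S^1\times D^3$ with decomposition $\partial Z_k=Y_k^-\cup Y_k^0\cup Y_k^+$ used in \cite{GayKir16,CasGayPin18_1}. In that model, $Y_k^0$ is by definition $P\times I$ (suitably rounded near $\partial P$), and the remaining gluing is bookkeeping.
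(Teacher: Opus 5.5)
Your proposal is correct and follows essentially the route the paper relies on. The paper gives no argument of its own and instead cites Gay--Kirby's Lemma~17, whose proof is this same unwinding of the boundary model $Y^0=(P\times\partial^0 D)\cup(\partial P\times D)$: the three copies are glued cyclically along the pages into a mapping torus, and the three wedges $\partial P\times D$ assemble into a neighbourhood $\partial P\times D^2$ of the binding $\partial\Sigma$. You are also right that this requires the model from the full definition rather than just the properties listed in the paper, and that what remains is routine, namely isotoping the transition maps $\phi_{i+1}\circ\phi_i^{-1}$ so they respect the product structure near $\partial P$.
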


\begin{theorem}[{\cite[Theorem~20]{GayKir16}}]
For any $4$-manifold $X$ with connected boundary and any open book decomposition of $\partial{X}$, there exists a relative trisection of $X$ that induces the given open book decomposition.
\end{theorem}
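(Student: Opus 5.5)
The plan is to follow Gay and Kirby's construction. Start from a handle decomposition of $X$ adapted to the given open book on $\partial X$, and split it into three sectors that are $1$-handlebodies.

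\emph{Step 1: a collar from the open book.} Write the open book on $\partial X$ as $(\Sigma_{p,b},\phi)$. Choose an arc system on the page $\Sigma_{p,b}$ cutting it into a disk, and factor the monodromy $\phi$ as a product of Dehn twists. The standard model is the $4$-manifold obtained from $\Sigma_{p,b}\times D^2$ by attaching $2$-handles along vanishing curves with page framing $-1$. On its boundary this realizes the open book with monodromy $\phi$ (a Lefschetz-fibration type model).

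\emph{Step 2: a Kirby diagram for $X$ matching this model.} Pick any handle decomposition of $X$ with one $0$-handle. Then modify it, using handle slides, births of cancelling $1$/$2$-handle pairs and stabilizations, until a collar $\partial X\times I$ carries a decomposition compatible with the open book. Equivalently, realize $X$ as the given model union handles attached from the inside. Gay and Kirby do this by using a relative Morse function on $(X,\partial X)$ whose restriction near $\partial X$ is the open book's circle-valued map. The key tool is a relative version of the existence of a Morse $2$-function, obtained via Cerf theory: deform a generic map $X\to D^2$, restricting on the boundary to the open book projection, until it has no cusps or folds of index issues. Equivalently it should have only indefinite folds arranged in a standard ``trisected'' pattern.

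\emph{Step 3: cut into three sectors.} Given such a Morse $2$-function $X\to D^2$, with boundary behavior prescribed by the open book, divide $D^2$ into three sectors. Over each sector, check that the preimage is a $1$-handlebody $\natural^k(S^1\times D^3)$, that the preimages of the radial arcs are compression bodies from $\Sigma_{g,b}$ to $\Sigma_{p,b}$, and that the central fiber is $\Sigma_{g,b}$. Balance the values of $k$ across the three sectors by stabilizations. By construction the induced open book, as in Lemma~\ref{lem:obd}, is the given one.

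\emph{Main obstacle.} The hardest part is Step 2: eliminating definite folds and cusps with the wrong orientation in the relative setting while keeping the boundary map fixed. I would try first to adapt the closed-case homotopy arguments of Gay and Kirby, swallowing definite folds by indefinite ones and moving cusps, with all modifications supported away from a collar of $\partial X$.
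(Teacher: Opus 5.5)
The paper gives no proof of this statement; it is quoted from Gay--Kirby, whose argument runs through Morse $2$-functions $G\colon X\to D^2$ and the obvious trisection of $D^2$. Your Steps~2--3 therefore point the right way, but the proposal has a wrong first step and leaves the real content undone.

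\textbf{Step 1 is wrong and should be dropped.} The manifold $W=\Sigma_{p,b}\times D^2\cup(\text{$2$-handles})$ is a \emph{different} filling of $\partial X$, not a collar of $\partial X$ inside $X$. Since $\partial W$ is already all of $\partial X$, there is no sense in which $X$ is ``$W$ union handles attached from the inside.'' For example, for the disk open book on $\partial X_1(U)=S^3$ your model is $D^4$, while $X=\CC P^2-\Int D^4$. In addition, $-1$-framed handles only realize monodromies that are products of right-handed twists, so a general $\phi$ would need $\pm1$ framings. What the open book actually supplies is boundary data for $G$: the fibration $\partial X\setminus\nu B\to\partial D^2$ together with the projection $\nu B\cong B\times D^2\to D^2$, so that the fibres over a collar of $\partial D^2$ are pages. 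This is also not a ``relative Morse function whose restriction is the circle-valued map''; a real-valued function cannot restrict to that map, which in any case is undefined on the binding.

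\textbf{The genuine gap is Step 2, which is the whole content of the theorem.} Relative to the boundary data above, you need a Morse $2$-function with three properties:
(a) all folds are indefinite;
(b) all fibres are connected;
(c) the critical image is in a specific normal form, namely embedded concentric fold circles, fibre genus increasing from $p$ near $\partial D^2$ to $g$ at the centre (so every cusp points outward), and cusps positioned suitably relative to the three sectors.

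Properties (a) and (b) together are the main relative existence theorem of Gay--Kirby's separate paper on indefinite Morse $2$-functions. That result is not a routine transcription of the closed-case moves such as swallowing definite folds. Connectedness of fibres, which you never mention, is essential both for rearranging the folds and for the central fibre (the triple intersection) to be a connected $\Sigma_{g,b}$.

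Property (c) requires further homotopies of the fold locus rel boundary. Your Step~3 checks are that each sector preimage is $\natural^k(S^1\times D^3)$ and each ray preimage is a compression body from $\Sigma_{g,b}$ to $\Sigma_{p,b}$. These hold for that normal form but fail for a general indefinite Morse $2$-function, where crossing a fold inside a sector can add a $2$-handle.

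Since (a)--(c) are exactly what you postpone with ``I would try to adapt,'' what you have is an outline of the Gay--Kirby strategy rather than a proof.
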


\begin{proposition}[{\cite[Corollary~2.10]{CasOzb19}}]\label{prop:Euler-rel}
If a $4$-manifold $X$ with boundary admits a $(g,k;p,b)$-relative trisection, then its Euler characteristic is given by $\chi(X)=g-3k+3p+2b-1$.
\end{proposition}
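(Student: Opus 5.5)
The plan is to compute $\chi(X)$ directly from the decomposition $X=X_1\cup X_2\cup X_3$ by inclusion--exclusion (Mayer--Vietoris). We use the diffeomorphism types of the sectors, the double intersections and the triple intersection listed above for a $(g,k;p,b)$-relative trisection.

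First we justify inclusion--exclusion. The sectors $X_i$ are codimension-$0$ submanifolds that meet only along their boundaries. So we thicken each $X_i$ slightly by a collar to get open sets $U_i\simeq X_i$. The thickening should be chosen so that $U_i\cap U_j$ deformation retracts onto $X_i\cap X_j$ and $U_1\cap U_2\cap U_3$ deformation retracts onto $X_1\cap X_2\cap X_3$. Since all pieces are compact manifolds, their homology is finitely generated. Applying Mayer--Vietoris twice then gives
\[
\chi(X)=\sum_i\chi(X_i)-\sum_{i<j}\chi(X_i\cap X_j)+\chi(X_1\cap X_2\cap X_3).
\]
For the second application we use that $(X_1\cup X_2)\cap X_3=(X_1\cap X_3)\cup(X_2\cap X_3)$, and these two pieces intersect in the triple intersection. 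This justification is the only step with any subtlety: the pieces are closed, so one must make sure the collars are chosen compatibly near the triple intersection, which sits in a corner of each sector.

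Next we compute each term. Since $X_i\cong\natural^k(S^1\times D^3)$ is homotopy equivalent to a wedge of $k$ circles, $\chi(X_i)=1-k$. Each double intersection is homotopy equivalent to a wedge of $g+p+b-1$ circles, because it is a $3$-dimensional $1$-handlebody of that genus, so $\chi(X_i\cap X_j)=2-g-p-b$. Finally, $\chi(\Sigma_{g,b})=2-2g-b$.

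Substituting these values gives
\[
\chi(X)=3(1-k)-3(2-g-p-b)+(2-2g-b)=g-3k+3p+2b-1,
\]
as claimed.
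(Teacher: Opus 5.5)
Your proof is correct. The two applications of Mayer--Vietoris give the inclusion--exclusion formula: you use $(X_1\cup X_2)\cap X_3=(X_1\cap X_3)\cup(X_2\cap X_3)$, whose two pieces meet in $X_1\cap X_2\cap X_3$. The values $\chi(X_i)=1-k$, $\chi(X_i\cap X_j)=2-g-p-b$ and $\chi(\Sigma_{g,b})=2-2g-b$ are right, and $3(1-k)-3(2-g-p-b)+(2-2g-b)=g-3k+3p+2b-1$.

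The paper gives no argument of its own for this proposition; it simply quotes it from Castro--Ozbagci. So there is no competing proof to compare against, and yours is a complete self-contained derivation from the structural facts listed in Section~2.

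On the one technical point you flag, you can avoid choosing compatible collars near the corner surface. Triangulate $X$ so that each $X_i$, and hence each double and triple intersection, is a subcomplex. This is possible because the sectors are smooth codimension-$0$ submanifolds with corners. Additivity $\chi(A\cup B)=\chi(A)+\chi(B)-\chi(A\cap B)$ for subcomplexes is then just cell counting.
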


A relative trisection can be described by a relative trisection diagram, which consists of a $4$-tuple $(\Sigma;\alpha, \beta, \gamma)$ of a surface and three sets of curves.
We recall the definition of a relative trisection diagram as given by Castro, Gay, and Pinz\'{o}n-Caiced~\cite{CasGayPin18_1}.

\begin{definition}
Let $\Sigma$ and $\Sigma'$ be compact, connected, oriented surfaces with boundary.
For $i\in \{1,\ldots,n\}$, let $\eta^i=\{\eta^i_1,\ldots,\eta^i_k\}$ and $\zeta^i=\{\zeta^i_1,\ldots,\zeta^i_k\}$ be sets of $k$ pairwise disjoint simple closed curves on $\Sigma$ and $\Sigma'$, respectively.
Then, we consider two $(n+1)$-tuples $\mathcal{D}:=(\Sigma;\eta^1,\ldots,\eta^n)$ and $\mathcal{D}':=(\Sigma';\zeta^1,\ldots,\zeta^n)$.
\begin{itemize}
\item
Suppose that $\Sigma=\Sigma'$. The two $(n+1)$-tuples $\mathcal{D}$ and $\mathcal{D}'$ are called \textit{isotopic} if for any $i\in\{1,\ldots,n\}$ there exists an ambient isotopy $\{\varphi_t^i:\Sigma\to\Sigma\}_{t\in[0,1]}$ such that $\varphi_1^i(\eta^i)=\zeta^i$.
\item
Suppose that $\Sigma=\Sigma'$. The two $(n+1)$-tuples $\mathcal{D}$ and $\mathcal{D}'$ are called \textit{slide-equivalent} if for each $i\in\{1,\ldots,n\}$, $\eta^i$ and $\zeta^i$ are related by a sequence of slides and ambient isotopies.
\item
The two $(n+1)$-tuples $\mathcal{D}$ and $\mathcal{D}'$ are called \textit{slide-diffeomorphic} if there exists a diffeomorphism $f:\Sigma\to \Sigma'$ such that $(f(\Sigma);f(\eta^1),\ldots,f(\eta^n))$ and $\mathcal{D}'$ are slide-equivalent.
Here, $f(\eta^i)$ is the set $\{f(\eta^i_1),\ldots,f(\eta^i_n)\}$ of curves mapped by $f$.
\end{itemize}
\end{definition}

\begin{definition}\label{def:rtd}
Let $g$, $k$, $p$, and $b$ be integers such that $g,k,p\geq0$, $b\geq1$, and $2p+b-1\leq k\leq g+p+b-1$.
Let $\Sigma$ be a surface diffeomorphic to $\Sigma_{g,b}$, and let $\alpha=\{\alpha_1,\ldots,\alpha_{g-p}\}$, $\beta=\{\beta_1,\ldots,\beta_{g-p}\}$, and $\gamma=\{\gamma_1,\ldots,\gamma_{g-p}\}$ be sets of $g-p$ pairwise disjoint simple closed curves on $\Sigma$.
The $4$-tuple $\mathcal{D}=(\Sigma;\alpha,\beta,\gamma)$ is called a $(g,k;p,b)$-\textit{relative trisection diagram} if each of the triples $(\Sigma;\alpha,\beta)$, $(\Sigma;\beta,\gamma)$, and $(\Sigma;\gamma,\alpha)$ is slide-diffeomorphic to the standard diagram $(\Sigma_{g,b};\delta,\epsilon)$ shown in Figure \ref{fig:td-gkpb}.
Here, the sets of curves $\delta=\{\delta_1,\ldots,\delta_{g-p}\}$ and $\epsilon=\{\epsilon_1,\ldots,\epsilon_{g-p}\}$ are depicted by the red and blue curves, respectively.
\end{definition}
\begin{figure}[!tbp]
\centering
\includegraphics[scale=1]{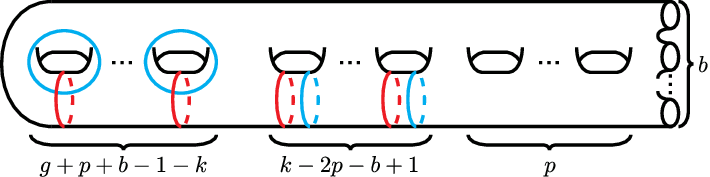}
\caption{The standard diagram $(\Sigma_{g,b}; \delta, \epsilon)$.}
\label{fig:td-gkpb}
\end{figure}

For a relative trisection diagram $\mathcal{D}=(\Sigma;\alpha,\beta,\gamma)$, we denote $\alpha$, $\beta$, and $\gamma$ by red, blue, and green curves, respectively.
See Figure~\ref{fig:td-S2xD2} for an example.
As illustrated on the right side of the figure, we often represent relative trisection diagrams planarly by using the identifications shown in Figure~\ref{fig:diskpair}.
\begin{figure}[!tbp]
\centering
\includegraphics[scale=1]{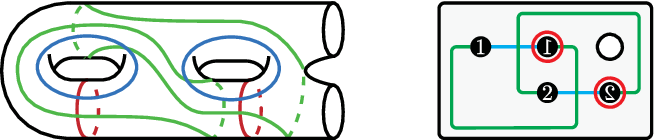}
\caption{Left: a $(2,1;0,2)$-relative trisection diagram. Right: a planar representation of the left diagram.}
\label{fig:td-S2xD2}
\end{figure}
\begin{figure}[!tbp]
\centering
\includegraphics[scale=1]{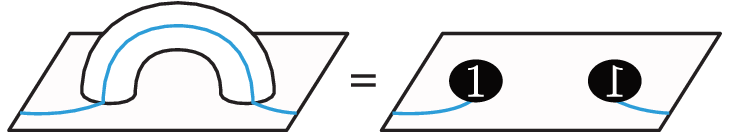}
\caption{A representation of a cylinder by a pair of labeled disks.}
\label{fig:diskpair}
\end{figure}

There is a correspondence between relative trisections and relative trisection diagrams.

\begin{theorem}[{\cite[Theorem~3]{CasGayPin18_1}}]\label{thm:rt-rtd}
The following hold.
\begin{enumerate}
\item
 For any $(g,k;p,b)$-relative trisection diagram $(\Sigma;\alpha,\beta,\gamma)$, there exists a unique (up to diffeomorphism) trisected $4$-manifold $X=X_1\cup X_2\cup X_3$ satisfying the following conditions.
\begin{itemize}
\item $X_1\cap X_2\cap X_3\cong \Sigma$.
\item Under the above identification, each of $\alpha$, $\beta$, and $\gamma$ curves bound compressing disks of $X_3\cap X_1$, $X_1\cap X_2$, $X_2\cap X_3$, respectively.
\end{itemize}
\item
For any relative trisection $\mathcal{T}$, there exists a relative trisection diagram $\mathcal{D}$ such that $\mathcal{T}$ is induced from $\mathcal{D}$ by (i).
\item
Let $\mathcal{D}$ and $\mathcal{D}'$ be relative trisection diagrams.
If the induced relative trisections $\mathcal{T}_{\mathcal{D}}$ and $\mathcal{T}_{\mathcal{D}'}$ are diffeomorphic, then $\mathcal{D}$ and $\mathcal{D}'$ are slide-diffeomorphic.
\end{enumerate}
\end{theorem}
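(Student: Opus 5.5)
We would prove the three parts separately. Part (i) is a construction followed by a uniqueness argument. Part (ii) amounts to choosing cut systems. Part (iii) is a uniqueness statement for cut systems of compression bodies.

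\textbf{Part (i).} Start from $\Sigma\cong\Sigma_{g,b}$ and build the trisected manifold from the center outward.
\begin{enumerate}
\item Take $\Sigma\times D^2$. Choose three arcs $\theta_1,\theta_2,\theta_3$ in $\partial D^2$.
\item Attach $4$-dimensional $2$-handles along $\alpha\times\{\theta_1\}$, $\beta\times\{\theta_2\}$ and $\gamma\times\{\theta_3\}$, each with the surface framing. This builds, over the three arcs, the compression bodies $C_\alpha$, $C_\beta$, $C_\gamma$ from $\Sigma_{g,b}$ to $\Sigma_{p,b}$.
\item Look at the part of the boundary lying over each complementary arc of $\partial D^2$, say the arc between $\theta_1$ and $\theta_2$. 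It is $C_\alpha\cup_\Sigma C_\beta$, together with a product region $\Sigma_{p,b}\times I$ along the outer boundary.
\item The hypothesis that $(\Sigma;\alpha,\beta)$ is slide-diffeomorphic to the standard diagram $(\Sigma_{g,b};\delta,\epsilon)$ identifies this $3$-manifold with the standard one. After gluing in the product region, it becomes $\#^{k}(S^1\times S^2)$, or more precisely the boundary of $\natural^{k}(S^1\times D^3)$ minus a neighborhood of the part that meets $\partial X$.
\item Fill each of the three pieces with a copy of $\natural^{k}(S^1\times D^3)$.
\end{enumerate}
Uniqueness up to diffeomorphism then follows from Laudenbach--Po\'enaru, which says every self-diffeomorphism of $\#^k(S^1\times S^2)$ extends over $\natural^k(S^1\times D^3)$. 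A relative version of this fact is needed, because the diffeomorphism must also respect the part of the boundary lying in $\partial X$. We would arrange that this part is a regular neighborhood of a spine meeting the page, and use the extension there as well. The three conditions on the diagram hold by construction.

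\textbf{Part (ii).} Start from a relative trisection $\mathcal T$.
\begin{enumerate}
\item Identify the central surface $X_1\cap X_2\cap X_3$ with $\Sigma$.
\item Each pairwise intersection $X_i\cap X_j$ is a compression body from $\Sigma_{g,b}$ to $\Sigma_{p,b}$. In each one, choose a cut system: $g-p$ disjoint compressing disks whose complement is $\Sigma_{p,b}\times I$. Their boundaries give $\alpha$, $\beta$ and $\gamma$.
\item Each $X_i$ is a $1$-handlebody, so its boundary, assembled from two of these compression bodies, is the standard $\#^k(S^1\times S^2)$ with its standard splitting. Waldhausen-type uniqueness of Heegaard splittings of $\#^k(S^1\times S^2)$, in the relative form for splittings with boundary, shows each pair of curve systems is slide-diffeomorphic to $(\delta,\epsilon)$.
\item By the uniqueness in part (i), the trisection built from this diagram is $\mathcal T$.
\end{enumerate}

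\textbf{Part (iii).} Suppose $\phi:\mathcal T_{\mathcal D}\to\mathcal T_{\mathcal D'}$ is a diffeomorphism of trisections.
\begin{enumerate}
\item $\phi$ restricts to a diffeomorphism $f:\Sigma\to\Sigma'$ that carries each compression body to its counterpart.
\item It therefore remains to show that two cut systems of the same compression body, with fixed outer boundary $\Sigma_{g,b}$, are related by handle slides and isotopies.
\item The standard argument is Johannson's, or the classical argument for handlebodies. Take an outermost arc of intersection between a disk of one system and the disks of the other, and surger along it. Each surgery is realized by a slide.
\end{enumerate}

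The main obstacle is the boundary, both here in part (iii) and in the filling step of part (i). In part (iii), a disk could a priori separate off a region containing a component of $\partial\Sigma$, and sliding over the boundary is not an allowed move. One must check that the surgery never needs to pass across $\partial\Sigma$. We would handle this by working in the compression body relative to the vertical boundary $\partial\Sigma\times I$, so that each surgery disk stays in the interior. In part (i), the analogous difficulty is that the relative Laudenbach--Po\'enaru extension must be compatible with the induced open book on the boundary, as in Lemma~\ref{lem:obd}.
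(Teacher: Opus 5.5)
The paper contains no proof of this statement. It is quoted from Castro--Gay--Pinz\'on-Caicedo (their Theorem~3) and used as a black box. Your outline matches the standard argument there, which adapts the closed-case argument of Gay--Kirby: build $Z=\Sigma\times D^2\cup$ 2-handles, cap off each piece $Y=C_\alpha\cup_\Sigma C_\beta$ of $\partial Z$ with a copy of $\natural^k(S^1\times D^3)$, read off cut systems, and use slide-uniqueness of cut systems. The architecture is right. However, the one step where the relative case really differs from the closed case, the one you call the main obstacle in (i), is left open, and ``a regular neighbourhood of a spine meeting the page'' does not close it.

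\textbf{The gap in (i).} To show the result does not depend on the filling, you compare two fillings $(W,j)$ and $(W',j')$. Here $j\colon Y\to\partial W$ is an embedding whose complement $P=\partial W\setminus j(Y)$ is the page piece $\Sigma_{p,b}\times I$. The comparison map $h=j'\circ j^{-1}$ is defined only on $j(Y)$, and Laudenbach--Po\'enaru can be applied only after $h$ has been extended over $P$. A priori the two fillings could induce different product structures on $P$ relative to $\Sigma_\alpha\sqcup\Sigma_\beta$, and then $h$ would not extend.

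The missing lemma is: \emph{a simple closed curve on $\partial Y$ bounds a disk in $P$ if and only if it bounds a disk in $Y$}. It follows in three steps.
\begin{enumerate}
\item By the standard diagram, $Y\cong(\#^{l}S^1\times S^2)\# H$ with $H=\Sigma_{p,b}\times[-1,1]$ and $\partial H=\partial Y$.
\item In the model, $H\cup_\partial P$ is the trivial-monodromy double, so $H$ and $P$ have the same meridians.
\item Since $\pi_1(H)$ is a free factor of $\pi_1(Y)$, Dehn's lemma shows a curve on $\partial Y$ bounds a disk in $Y$ exactly when it bounds one in $H$.
\end{enumerate}
So the meridian set of $P$ is intrinsic to $Y$. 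Hence $h|_{\partial Y}$ preserves it and extends over the handlebody $P$. Only then does Laudenbach--Po\'enaru extend the resulting diffeomorphism of $\#^k(S^1\times S^2)$ over $W$.

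You also need the routine converse, both for uniqueness as stated and for step~4 of (ii). In any trisected $X$ with this diagram, a neighbourhood of the union of the three compression bodies is $\Sigma\times D^2\cup$ 2-handles, and its complement is three copies of the sectors. So $X$ arises from your construction.

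\textbf{Two smaller points.}

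In (ii), drop the ``relative Waldhausen theorem''. It is not a standard black box, and deducing slide-standardness of $(\Sigma;\alpha,\beta)$ from standardness of a closed-up Heegaard splitting would take extra work. It is also unnecessary. In the definition of Castro--Gay--Pinz\'on-Caicedo, to which the paper defers, each $X_i$ comes with a diffeomorphism to the model carrying $X_i\cap X_{i\pm1}$ to the model compression bodies. So only the cut-system lemma from (iii) is needed.

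In (iii), the boundary is not actually an obstacle. After cutting along one cut system you are in $\Sigma_{p,b}\times I$, whose face $\Sigma_{p,b}\times\{0\}$ is $\pi_1$-injective. So the outermost-arc disk is parallel to a disk $F\subset\Sigma_{p,b}\times\{0\}$. Being a disk, $F$ contains no boundary component, and the replacement is a sequence of slides over the disks whose copies lie in $F$.
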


This theorem means the following one-to-one correspondence.
\begin{equation*}
\frac{\{\text{relative trisections}\}}{\text{diffeomorphism}} \ {\stackrel{1:1}{\longleftrightarrow}} \ \frac{\{\text{relative trisection diagrams}\}}{\text{surface diffeomorphisms, slides of curves}}.
\end{equation*}

%
\section{Lower bounds for trisection genus}\label{sect:lower}

In this section, we study lower bounds for the trisection genus of $4$-manifolds with boundary.
For any knot trace $X_m(K)$, the Euler characteristic is equal to $2$ since $X_m(K)$ admits a handle decomposition consisting of a single $0$-handle and a single $2$-handle.
First, we restrict the possible parameters $(g,k;p,b)$ for relative trisections of $4$-manifolds with Euler characteristic $2$.

\begin{lemma}\label{lem:gkpb}
Let $X$ be a $4$-manifold with boundary such that $\chi(X) = 2$.
Suppose that $X$ admits a $(g,k;p,b)$-relative trisection.
Then $g \geq 1$, and the following hold:
\begin{itemize}
\item If $g=1$, then $(1,k;p,b)=(1,0;0,1)$.
\item If $g=2$, then $(2,k;p,b)=(2,1;0,2)$.
\item If $g=3$, then $(3,k;p,b)=(3,2;0,3)$.
\item If $g=4$, then $(4,k;p,b)=(4,3;0,4)$, $(4,1;0,1)$, or $(4,2;1,1)$.
\end{itemize}
\end{lemma}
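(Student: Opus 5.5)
The plan is a purely arithmetic case analysis. It combines the Euler characteristic formula of Proposition~\ref{prop:Euler-rel} with the parameter constraints in the definition of a $(g,k;p,b)$-relative trisection. No topology beyond these two inputs should be needed.

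First, substitute $\chi(X)=2$ into $\chi(X)=g-3k+3p+2b-1$ to get
\[
g = 3(k-p) - 2b + 3 .
\]
Put $d := k-p$. The lower constraint $2p+b-1\le k$ says $d\ge p+b-1$. Plugging this into the formula gives
\[
g \ge 3(p+b-1)-2b+3 = 3p+b \ge 1 .
\]
This already proves $g\ge 1$, and more precisely it gives the bound $3p+b\le g$. Reducing the formula modulo $3$ gives the congruence $g\equiv b \pmod 3$.

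Next, check that the upper constraint $k\le g+p+b-1$ is automatic. It is equivalent to $d\le g+b-1 = 3d-b+2$, that is, $2d\ge b-2$. This holds because $d\ge b-1\ge (b-2)/2$ for $b\ge 0$. Hence the admissible parameters are exactly those with
\[
b\ge 1,\quad p\ge 0,\quad 3p+b\le g,\quad g\equiv b \pmod 3,
\]
and then $k = p + (g+2b-3)/3$ is determined.

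Finally, enumerate these solutions for $g\le 4$.
\begin{itemize}
\item For $g=1$: $p=0$, $b=1$, $k=0$.
\item For $g=2$: $p=0$ and $b\equiv 2 \pmod 3$ with $b\le 2$, so $b=2$ and $k=1$.
\item For $g=3$: $p=1$ would force $b\le 0$, so $p=0$. Then $b\equiv 0 \pmod 3$ with $1\le b\le 3$ forces $b=3$ and $k=2$.
\item For $g=4$: $b\equiv 1\pmod 3$. With $p=0$ we get $b\in\{1,4\}$, giving $k=1$ or $k=3$. With $p=1$ we need $b\le 1$, so $b=1$ and $k=2$. The value $p\ge 2$ is excluded by $3p+b\le 4$.
\end{itemize}
These are exactly the four cases in the statement.

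No step is a real obstacle. The only point needing care is making sure the inequality $2p+b-1\le k$ is used to derive $g\ge 3p+b$, and confirming that the upper inequality imposes nothing extra, so that no candidate tuple is wrongly discarded or wrongly kept.
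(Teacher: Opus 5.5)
Your argument is correct. It differs from the paper's in how the case analysis is organized and in how $g\ge 1$ is obtained.

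The paper gets $g\ge 1$ topologically: a genus-$0$ relative trisection exhibits $X$ as $\natural^k(S^1\times D^3)$, whose Euler characteristic $1-k$ is at most $1$. It then treats each $g\in\{1,2,3,4\}$ separately. For each one it lists every tuple $(g,k;p,b)$ permitted by $2p+b-1\le k\le g+p+b-1$ as a function of $b$, and substitutes each tuple into $\chi(X)=g-3k+3p+2b-1$.

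You instead eliminate $k$ once, and the upper bound on $k$ turns out to be automatic. This yields the uniform conditions $3p+b\le g$ and $g\equiv b \pmod 3$. That gives $g\ge 1$ with no topology at all, since the Euler formula of Proposition~\ref{prop:Euler-rel} applies to $g=0$ too. It also reduces each case to a one-line check that works the same way for every $g$, whereas the paper only remarks that $g\ge 5$ is ``similar.'' The inequality $3p+b\le g$ is exactly what the paper re-derives ad hoc in the proof of Theorem~\ref{thm:g>N}, so your formulation would streamline that argument as well.

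Your parametrization also avoids a slip in the paper's $g=4$ case. There the third solution is written as $(4,b-1;1,b)$, but it should be $(4,b+1;1,b)$ with $b=1$, i.e.\ $(4,2;1,1)$.
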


\begin{proof}
If a $4$-manifold with boundary admits a genus-$0$ relative trisection, then it is diffeomorphic to a $4$-dimensional $1$-handlebody $\natural^k(S^1 \times D^3)$ for some $k \geq 0$.
Its Euler characteristic satisfies $\chi(\natural^k S^1 \times D^3) = 1 - k\leq1$.
Therefore, any $4$-manifold $X$ with $\chi(X) = 2$ cannot admit a genus-$0$ relative trisection.

By the definition of relative trisections and Proposition~\ref{prop:Euler-rel}, if $X$ admits a $(g,k;p,b)$-relative trisection, then the following conditions hold:
\begin{gather}
g,k,p\geq0, \quad b\geq1 \label{eq:rt-def1} \\
2p+b-1 \leq k \leq g+p+b-1 \label{eq:rt-def2} \\
\chi(X) = g-3k+3p+2b-1 \label{eq:rt-Euler1}
\end{gather}

We now consider the case $g=1$.
From the inequalities (\ref{eq:rt-def1}) and (\ref{eq:rt-def2}), the possible types of $(1,k;p,b)$ are listed below:
\begin{equation*}
(1,b-1;0,b), \quad (1,b;0,b), \quad (1,b+1;1,b).
\end{equation*}
Substituting $(g,k;p,b)=(1,b-1;0,b)$ into the Euler characteristic formula~(\ref{eq:rt-Euler1}), we see that $\chi(X)=2$ if and only if $b=1$.
In the other two cases, we obtain $b \leq 0$, which is not admissible.

Next, we consider the case $g=2$. From (\ref{eq:rt-def1}) and (\ref{eq:rt-def2}), the possible types of $(2,k;p,b)$ are listed below:
\begin{align*}
&(2,b-1;0,b), \quad (2,b;0,b), \quad (2,b+1;0,b), \\
&(2,b+1;1,b), \quad (2,b+2;1,b), \quad (2,b+3;2,b).
\end{align*}
By the formula~(\ref{eq:rt-Euler1}), we see that $\chi(X)=2$ holds only when $(2,b-1;0,b)$ with $b=2$.
In all the other cases, we obtain $b\leq0$, which is not admissible.

We consider the case $g=3$. From (\ref{eq:rt-def1}) and (\ref{eq:rt-def2}), the possible types of $(3,k;p,b)$ are listed below:
\begin{align*}
&(3,b-1;0,b), \quad (3,b;0,b), \quad (3,b+1;0,b), \quad (3,b+2;0,b), \\
&(3,b+1;1,b), \quad (3,b+2;1,b), \quad (3,b+3;1,b), \\
&(3,b+3;2,b), \quad (3,b+4;2,b), \quad (3,b+5;3,b).
\end{align*}
By the formula~(\ref{eq:rt-Euler1}), we see that $\chi(X)=2$ holds only when $(g,k;p,b)=(3,b-1;0,b)$ with $b=3$.
In all the other cases, we obtain $b\leq0$, which is not admissible.

Finally, we consider the case $g=4$. From (\ref{eq:rt-def1}) and (\ref{eq:rt-def2}), the possible types of $(4,k;p,b)$ are listed below:
\begin{align*}
&(4,b-1;0,b), \quad (4,b;0,b), \quad (4,b+1;0,b), \quad (4,b+2;0,b), \quad (4,b+3;0,b) \\
&(4,b+1;1,b), \quad (4,b+2;1,b), \quad (4,b+3;1,b), \quad (4,b+4;1,b), \\
&(4,b+3;2,b), \quad (4,b+4;2,b), \quad (4,b+5;2,b), \\
&(4,b+5;3,b), \quad (4,b+6;3,b), \quad (4,b+7;4,b).
\end{align*}
By the formula~(\ref{eq:rt-Euler1}), we see that $\chi(X) = 2$ holds in the cases
$(4,b-1;0,b)$ with $b = 4$,
$(4,b;0,b)$ with $b = 1$, or
$(4,b-1;1,b)$ with $b = 1$.
All other possibilities yield $b \leq 0$, which is not admissible.
\end{proof}

Note that the possible types of $(g,k;p,b)$ for the case $g \geq 5$ can be determined by a similar argument.
As an application of Lemma~\ref{lem:gkpb}, we obtain the following theorem.

\begin{theorem}\label{thm:lb-tg-chi2}
Let $X$ be a $4$-manifold with boundary such that $\chi(X) = 2$.
Then we have $g(X)\geq 1$, and the following hold:
\begin{itemize}
\item
If $\partial{X}$ is not homeomorphic to $S^3$, then $g(X)\geq 2$.
\item
If $\partial{X}$ is not homeomorphic to ${L(p,1)}$ for any $p\in\ZZ$, then $g(X)\geq 3$.
\item
If $\partial{X}$ is not homeomorphic to $M(a_1,a_2,a_3)$ for any $a_1,a_2,a_3\in\ZZ$, then $g(X)\geq 4$. Here, $M(a_1,a_2,a_3)$ is the closed $3$-manifold given by the surgery diagram of Figure~\ref{fig:Kd-Seif3sing}.
\end{itemize}
\end{theorem}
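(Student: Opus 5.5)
The plan is to combine Lemma~\ref{lem:gkpb} with Lemma~\ref{lem:obd}. The bound $g(X)\geq1$ is already in Lemma~\ref{lem:gkpb}. For each remaining bullet we argue by contraposition: if $X$ admits a relative trisection of genus $1$, $2$ or $3$, we identify $\partial X$ from the open book decomposition induced by the trisection. By Lemma~\ref{lem:gkpb}, a relative trisection of genus $g\le 3$ has parameters $(1,0;0,1)$, $(2,1;0,2)$, or $(3,2;0,3)$. In every case $p=0$ and $b=g$, so by Lemma~\ref{lem:obd} the boundary $\partial X$ carries an open book decomposition whose page is a planar surface $\Sigma_{0,b}$ with $b=g$ boundary components.

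We treat the three possible pages in turn. If $b=1$, the page is a disk. The monodromy lies in the mapping class group of $D^2$ relative to its boundary, which is trivial, so $\partial X\cong S^3$; hence $g(X)=1$ forces $\partial X\cong S^3$. If $b=2$, the page is an annulus and the monodromy is $\tau^n$ for a Dehn twist $\tau$ about the core curve and some $n\in\ZZ$. The standard identification gives $\partial X$ as the boundary of the plumbing described by a single unknot with framing $\mp1/n$-type data. Concretely, an annular open book with monodromy $\tau^n$ is the lens space $L(n,1)$, with $n=0$ giving $S^1\times S^2=L(0,1)$ and $n=\pm1$ giving $S^3=L(\pm1,1)$. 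Hence genus $\le 2$ forces $\partial X\cong L(p,1)$ for some $p$, and $S^3$ is included as $L(1,1)$.

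If $b=3$, the page is a pair of pants. Its mapping class group relative to the boundary is free abelian, generated by the three boundary-parallel Dehn twists $\tau_1,\tau_2,\tau_3$, so the monodromy is $\tau_1^{a_1}\tau_2^{a_2}\tau_3^{a_3}$. We then translate this open book into a surgery diagram. Start from the trivial open book with page $\Sigma_{0,3}$, which gives $\#^2(S^1\times S^2)$, drawn as two $0$-framed unknots. Each boundary twist $\tau_i^{a_i}$ is realised by a $\mp1/a_i$-surgery along a copy of the curve parallel to the $i$-th boundary component, pushed onto a page. After blowing down or converting via slam-dunks, the result should match the diagram of Figure~\ref{fig:Kd-Seif3sing}. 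This gives $\partial X\cong M(a_1,a_2,a_3)$, and the contrapositive of this case yields the third bullet; the earlier cases give the first two bullets in the same way.

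The main obstacle is this last identification: checking that the monodromy surgery description produces exactly the manifold $M(a_1,a_2,a_3)$ of Figure~\ref{fig:Kd-Seif3sing}, with the correct sign conventions, for all integer triples. I would first draw the page $\Sigma_{0,3}$ as a disk with two $1$-handles, equivalently as a Murasugi sum of two Hopf-type annuli. I would put the three twist curves on it as two small circles around the handles together with the outer boundary, and then verify by Kirby calculus that the diagram matches Figure~\ref{fig:Kd-Seif3sing}. Since the conclusion is only up to homeomorphism and the $a_i$ range over all of $\ZZ$, sign and orientation discrepancies can be absorbed into relabelling the $a_i$ or into orientation reversal, which is harmless for a homeomorphism statement.
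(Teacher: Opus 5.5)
Your proposal is correct and is essentially the paper's argument: Lemmas~\ref{lem:gkpb} and~\ref{lem:obd} give a planar open book on $\partial X$ with $g$ binding components. Where the paper simply cites Arikan's classification for $b\le 3$, you rederive it from $\mathrm{Mod}(\Sigma_{0,b})$ and Kirby calculus, and the final match with Figure~\ref{fig:Kd-Seif3sing} is exactly what that citation supplies.

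One small point, left equally implicit in the paper: the third bullet also needs the cases $g(X)\in\{1,2\}$. These follow because Hopf-stabilizing the disk or annulus open book along an arc with both endpoints on one binding component gives a planar open book with three binding components, so $S^3$ and $L(p,1)$ are themselves of the form $M(a_1,a_2,a_3)$, just as you used $S^3=L(1,1)$ for the second bullet.
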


\begin{proof}
By Lemmas~\ref{lem:obd} and \ref{lem:gkpb}, for $g \in \{1,2,3\}$, if $X$ admits a genus-$g$ relative trisection, then $\partial{X}$ admits an open book decomposition with page $\Sigma_{0,g}$.
For $b\leq 3$, let $M$ be a closed $3$-manifold admitting a planar open book decomposition with $b$ binding components.
Then $M$ is classified as follows (see, e.g. \cite{Ari08}).
\begin{itemize}
\item
If $b = 1$, then $M$ is homeomorphic to $S^3$.
\item
If $b = 2$, then $M$ is homeomorphic to the lens space $L(p,1)$ for some $p \in \ZZ$.
\item
If $b = 3$, then $M$ is homeomorphic to $M(a_1,a_2,a_3)$ for some $a_1,a_2,a_3\in\ZZ$.
\end{itemize}
%
%
\end{proof}

\begin{remark}\label{rem:Mabc}
If at least one of $a_1, a_2, a_3$ is in $\{0, \pm 1\}$, then $M(a_1, a_2, a_3)$ is homeomorphic to either a lens space $L(p,q)$ or a connected sum $L(r,1) \# L(s,1)$ for some integers $p,q,r,s$ (see Section~4 of~\cite{Ari08}).
Except for the reducible cases, $M(a_1, a_2, a_3)$ is a Seifert fibered space.
By Theorem~\ref{thm:lb-tg-chi2}, for a $4$-manifold with Euler characteristic $2$, if its boundary $3$-manifold is hyperbolic, then the trisection genus must be at least $4$.
\end{remark}

For similar lower bounds on the trisection genus of $4$-manifolds with arbitrary Euler characteristic, see Section~4 of \cite{Tak25_1}.

\section{Determining the trisection genus of knot traces}\label{sect:determining}

In this section, we study the trisection genus of the knot traces obtained by several specific knots.
We give proofs of Proposition~\ref{thm:tg-unknot} and  Theorems~\ref{thm:tg-Tp,p+1}, \ref{thm:pretzel}, and \ref{thm:tg-fig8}.

\subsection{Unknot}

In this subsection, we prove the following theorem.

\mainunknot*

The $4$-manifold $X_m(U)$ is the disk bundle over $S^2$ with Euler number $m$.
In particular, we have $X_{\pm 1}(U)\cong \pm\CC{P}^2-\Int{D^4}$, which admit $(1,0;0,1)$-relative trisections obtained from the $(1,0)$-trisections of $\pm\CC{P}^2$ by removing a $4$-ball.
The following proposition gives a genus-$2$ relative trisection diagram of $X_m(U)$ for arbitrary $m$.

\begin{proposition}\label{prop:2102rtd-unknot}
Let $\calD_{0} = (\Sigma, \alpha, \beta, \gamma)$ be the left diagram of Figure~\ref{fig:td-unknot}.
For each integer $m$, we define $\calD_{m} = (\Sigma, \alpha, \beta, \tau_c^m(\gamma))$, where $\tau_c^m(\gamma)$ is obtained from $\gamma$ in $\mathcal{D}_{0}$ by applying an $m$-fold Dehn twist $\tau_c^m$ along the curve $c$ (see the right side of Figure~\ref{fig:td-unknot}).
Then $\calD_m$ is a $(2,1;0,2)$-relative trisection diagram of the $m$-trace of the unknot $U$.
\begin{figure}[!htbp]
\centering
\includegraphics[scale=1]{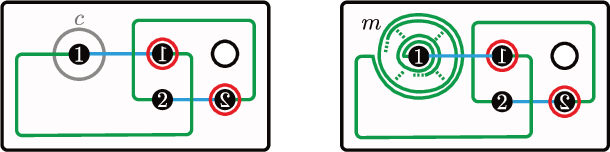}
\caption{Left: $\calD_0$. Right: $\calD_m$.}
\label{fig:td-unknot}
\end{figure}
\end{proposition}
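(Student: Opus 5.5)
We have to check two things: that $\calD_m$ is a $(2,1;0,2)$-relative trisection diagram, and that the $4$-manifold it determines via Theorem~\ref{thm:rt-rtd} is $X_m(U)$.

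\textbf{Step 1: the diagram axioms.} Here $\Sigma\cong\Sigma_{2,2}$, and each of $\alpha,\beta,\gamma$ consists of $g-p=2$ curves. By Definition~\ref{def:rtd}, each pair must be slide-diffeomorphic to the standard diagram $(\Sigma_{2,2};\delta,\epsilon)$ with $k=1$. Concretely, one pair of curves should be parallel (or become parallel after a slide), and the other pair should meet geometrically once.

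\begin{itemize}
\item For $\calD_0$ this is checked directly from the picture for each of $(\alpha,\beta)$, $(\beta,\gamma)$ and $(\gamma,\alpha)$.
\item The pair $(\alpha,\beta)$ does not change with $m$.
\item For the pairs involving $\tau_c^m(\gamma)$, the key observation I expect from the figure is that $c$ is disjoint from $\beta$, or from $\alpha$, or isotopic to a curve in the relevant handlebody.
\item If $c$ is disjoint from $\beta$, then $(\beta,\tau_c^m(\gamma))=\tau_c^m(\beta,\gamma)$ is diffeomorphic to $(\beta,\gamma)$, so standardness is preserved.
\item If $c$ meets $\alpha$, I would instead show that $c$ is isotopic to a curve that is a band sum of curves in $\gamma$, or is itself parallel to a $\gamma$ curve. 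Then the twist changes $\gamma$ only by handle slides, up to isotopy, and the pair $(\gamma,\alpha)$ stays standard.
\end{itemize}

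Checking this second pair is the main obstacle. It depends on the precise position of $c$, and I would try first to show that $c$ bounds, together with a $\gamma$ curve, a subsurface on which the twist acts by slides.

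\textbf{Step 2: identify the manifold.} I would use the Kirby diagram algorithm for relative trisection diagrams: the $\alpha$ and $\beta$ curves give the $1$-handlebody, and the $\gamma$ curves that are not parallel to $\beta$ become $2$-handles, framed by the surface framing. Since $k=1$ and $\chi=2$, the result should be one $1$-handle and two $2$-handles. Equivalently, after cancellation, it is a single $2$-handle attached to $B^4$.

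\begin{itemize}
\item For $\calD_0$, the surviving $2$-handle curve lies on a page $\Sigma_{0,2}$-type region and is an unknot with surface framing $0$. So $\calD_0$ gives $X_0(U)=S^2\times D^2$, which matches Figure~\ref{fig:td-S2xD2}.
\item Applying $\tau_c^m$, where $c$ is parallel to the attaching curve's pushoff direction (the core of the annular region), leaves the knot type unchanged. It changes the surface framing by $m$, because the twist along $c$ adds $m$ to the linking with the pushoff, as in the standard framing-change trick for twists along a curve disjoint from $\alpha\cup\beta$ relevant handles.
\end{itemize}

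Hence $\calD_m$ yields the $m$-framed unknot, that is, $X_m(U)$. As a consistency check, the boundary open book has page $\Sigma_{0,2}$ with monodromy $\tau^{m}$ about the core, up to sign conventions. This gives $L(m,1)=S^3_{-m}(U)$ or $S^3_m(U)$ depending on orientation, in agreement with Lemma~\ref{lem:obd} and Theorem~\ref{thm:lb-tg-chi2}.
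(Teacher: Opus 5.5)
There is a genuine gap in Step~1, exactly where you flag the main obstacle. Your proposed mechanism for the hard pair defeats itself.
\begin{itemize}
\item If $c$ were parallel to a $\gamma$ curve, then $\tau_c^m(\gamma)=\gamma$, because the $\gamma$ curves are pairwise disjoint.
\item If $c$ were a band sum of $\gamma$ curves, i.e.\ bounded a disk in the $\gamma$ compression body, then $\tau_c$ would extend over that compression body. So $\tau_c^m(\gamma)$ would be slide-equivalent to $\gamma$.
\end{itemize}
In either case $\calD_m$ would be slide-equivalent to $\calD_0$. By Theorem~\ref{thm:rt-rtd}, every $\calD_m$ would then represent the same manifold $X_0(U)$, contradicting your Step~2: $H_1(\partial X_m(U))$ is finite for $m\neq 0$ but is $\ZZ$ for $m=0$.

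The same reasoning shows that $c$ cannot be disjoint from (or bound a disk in the compression body of) both $\alpha$ and $\beta$. Otherwise applying $\tau_c^{-m}$ to the whole diagram would carry $\calD_m$ to something slide-equivalent to $\calD_0$.

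In the paper, $c$ is disjoint from $\alpha$. So $(\tau_c^m(\gamma),\alpha)$ is handled by your easy mechanism: apply $\tau_c^{-m}$, then isotope. Consequently $c$ genuinely meets $\beta$. The pair $(\beta,\tau_c^m(\gamma))$ then needs an honest argument about that pair, one that does not reduce to showing that $\tau_c^m(\gamma)$ is slide-equivalent to $\gamma$. The paper supplies this with explicit moves:
\begin{itemize}
\item operation~(iii) on a highlighted region;
\item dragging the disk labeled ``2'' (encircled by a $\beta$ curve) along the green curve, passing it through the horizontal $\beta$ curve via operation~(ii);
\item discarding windings of green curves around black disks via operation~(i);
\item finishing with operation~(iv).
\end{itemize}
Your proposal contains nothing that substitutes for this step.

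Two further problems. First, your description of the target is wrong for this type. In the standard $(g,k;p,b)$ diagram the number of parallel pairs is $k-2p-b+1$, which is $0$ for $(2,1;0,2)$. So both pairs must be dual: each curve meets exactly one curve of the other family, once. Your ``one parallel pair and one dual pair'' is the closed genus-$2$, $k=1$ picture, and on $\Sigma_{2,2}$ it would correspond to type $(2,2;0,2)$. Checking that condition would lead you astray already for $(\alpha,\beta)$.

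Second, Step~2 follows the same route as the paper: the Kim--Miller algorithm followed by handle cancellation. However, your justification that the twist only shifts the framing by $m$ assumes $c$ is disjoint from the relevant $\alpha\cup\beta$ curves, which is false for the reason above. The effect of $\tau_c^m$ has to be read off from the actual Kirby diagram the algorithm produces, as the paper does in Figure~\ref{fig:kd-unknot}, before cancelling the $1$-handle.
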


\begin{proof}
We first verify that $\calD_m$ is a $(2,1;0,2)$-relative trisection diagram.
We want to prove that the triples $(\Sigma; \alpha, \beta)$, $(\Sigma; \beta, \gamma)$, and $(\Sigma; \gamma, \alpha)$ are slide-diffeomorphic to the standard diagram shown in Figure~\ref{fig:td-gkpb}.
It is easy to see that $(\Sigma; \alpha, \beta)$ is standard (see Figure~\ref{fig:td-O-Sab}).
\begin{figure}[!htbp]
\centering
\includegraphics[scale=1]{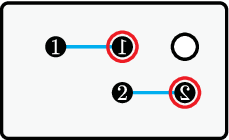}
\caption{$(\Sigma; \alpha, \beta)$.}
\label{fig:td-O-Sab}
\end{figure}

To prove that the remaining two cases, $(\Sigma; \beta, \gamma)$ and $(\Sigma; \gamma, \alpha)$, are also slide-diffeomorphic to the standard diagram, we use the five operations shown in Figure~\ref{fig:opes}.
\begin{figure}[!tbp]
\centering
\includegraphics[scale=1]{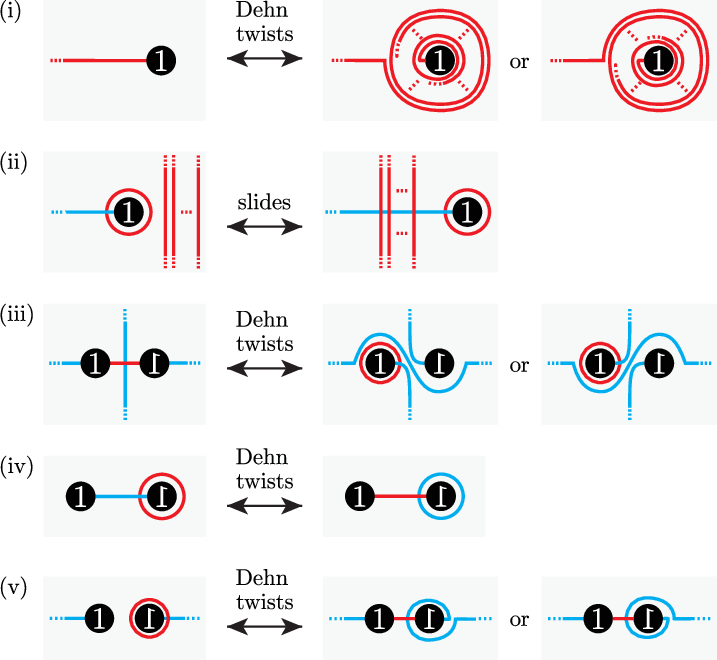}
\caption{The operations (i), (ii), (iii), (iv), and (v).}
\label{fig:opes}
\end{figure}
These operations are obtained by surface diffeomorphisms and slides of curves (see Section~3 of \cite{Tak25_1}).

The case of $(\Sigma; \gamma, \alpha)$ is relatively simple.
We can modify it into the standard diagram by surface diffeomorphisms and isotopies of the curves, as shown in Figure~\ref{fig:td-O-Sca}.
\begin{figure}[!tbp]
\centering
\includegraphics[scale=1]{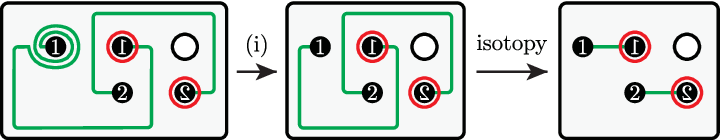}
\caption{Surface diffeomorphisms and slides of curves modifying $(\Sigma; \gamma, \alpha)$ into the standard diagram (for the case $m=2$).}
\label{fig:td-O-Sca}
\end{figure}
We first apply Dehn twists $\tau^{-m}_c$ along the circle $c$, and then obtain the final diagram by ambient isotopies of surfaces.

The triple $(\Sigma; \beta, \gamma)$ can be modified into the standard diagram moves shown in Figure~\ref{fig:td-O-Sbc}.
\begin{figure}[!tbp]
\centering
\includegraphics[scale=1]{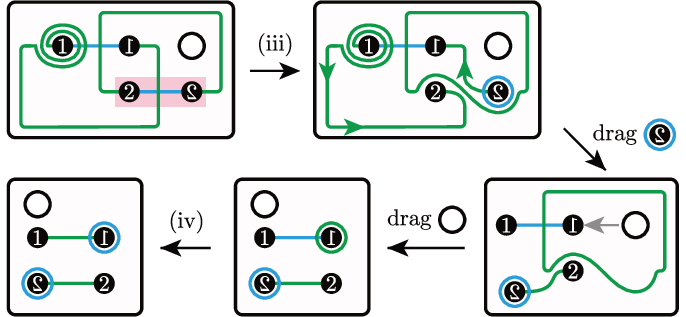}
\caption{Surface diffeomorphisms and slides of curves modifying $(\Sigma;\beta, \gamma)$ into the standard diagram (for the case $m=2$).}
\label{fig:td-O-Sbc}
\end{figure}
First, we apply the operation~(iii) to the highlighted region in the first diagram.
Next, we drag the black disk labeled ``2'', enclosed by the blue circle, along the indicated green curve.
During this process, when the black disk approaches the horizontal blue curve, it can pass through it by applying the operation~(ii).
Note that we can ignore the number of rotations of a green curve around a black disk by applying operation~(i).
After dragging the hole, we apply operation~(iv), which yields the desired diagram.

Next, we prove that the $4$-manifold represented by $\calD_m$ is diffeomorphic to $X_m(U)$.
To do so, we use the algorithm introduced by Kim and Miller~\cite[Section~2.2]{KimMil20}, which gives a method for drawing a handlebody diagram from a relative trisection diagram.
Applying this algorithm to the diagram $\calD_m$ in Figure~\ref{fig:td-unknot} (right), we obtain the handlebody diagram shown in Figure~\ref{fig:kd-unknot} (left).
By performing handle cancellation, we recover the desired diagram.
%
\begin{figure}[!htbp]
\centering
\includegraphics[scale=1]{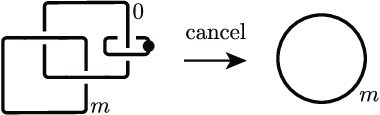}
\caption{A handlemove of the 4-manifold induced by $\calD_m$.}
\label{fig:kd-unknot}
\end{figure}
\end{proof}

For an alternative construction of a $(2,1;0,2)$-relative trisection diagram of $X_m(U)$, see Section~5.1 of~\cite{CasGayPin18_1}.

\begin{proof}[Proof of Proposition~\ref{thm:tg-unknot}]
By Proposition~\ref{prop:2102rtd-unknot}, we see that $g(X_m(U))\leq2$ for any $m\in\ZZ$.
The $\pm1$-trace $X_{\pm 1}(U) \cong \pm\CC{P^2}-\Int D^4$ admit $(1,0;0,1)$-relative trisections, hence $g(X_{\pm1}(U))\leq1$.
The boundary $S^3_{m}(U)$ is homeomorphic to $S^3$ if and only if $m = \pm1$.
For $m\in\ZZ-\{\pm1\}$, the boundary is homeomorphic to the lens space $-L(m,1)$.
Therefore, by Theorem~\ref{thm:lb-tg-chi2}, we have $g(X_m(U)) \geq 1$ if $m = \pm1$, and $g(X_m(U)) \geq 2$ otherwise.
\end{proof}

\subsection{Torus knots}

In this subsection, we determine or give bounds for the trisection genus of traces for several torus knots. First, we prove the following theorem.

\maintorus*

The next proposition gives a genus-$3$ relative trisection diagrams of the traces of $T_{p,p+1}$.

\begin{proposition}\label{prop:3203rtd-torus}
Let $\calD_{0,0} = (\Sigma, \alpha, \beta, \gamma)$ be the left diagram in Figure~\ref{fig:td-Tpp+1}.
For integers $k$ and $l$, we define
\begin{equation*}
\calD_{k,l}=(\Sigma, \alpha, \beta, {\tau_{c_1}^k}\circ{\tau_{c_2}^l}(\gamma)),
\end{equation*}
where ${\tau_{c_1}^k}\circ{\tau_{c_2}^l}(\gamma)$ is obtained from $\gamma$ in $\mathcal{D}_{0,0}$ by applying Dehn twists along the curves $c_1$ and $c_2$ (see the right side of Figure~\ref{fig:td-Tpp+1}).
Then, $\calD_{k,l}$ is a $(3,2;0,3)$-relative trisection diagram of the $(k^2+3k+l+1)$-trace of the torus knot $T_{k+1,k+2}$.
\begin{figure}[!htbp]
\centering
\includegraphics[scale=1]{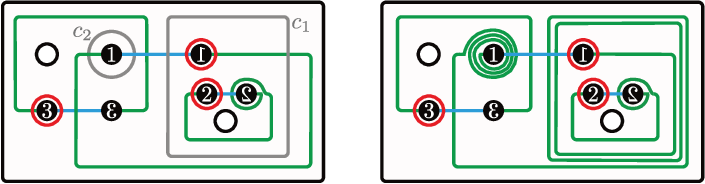}
\caption{Left: $\calD_{0,0}$. Right: $\calD_{k,l}$ for the case $(k,l)=(2,3)$.}
\label{fig:td-Tpp+1}
\end{figure}
\end{proposition}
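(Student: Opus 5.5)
The proof has the same two-part structure as that of Proposition~\ref{prop:2102rtd-unknot}. First we check that $\calD_{k,l}$ is a $(3,2;0,3)$-relative trisection diagram. Then we identify the $4$-manifold it represents using the Kim--Miller algorithm~\cite[Section~2.2]{KimMil20}.

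\emph{Step 1: the pairs are standard.} The pair $(\Sigma;\alpha,\beta)$ does not depend on $(k,l)$. We check directly from Figure~\ref{fig:td-Tpp+1} that it is standard (compare Figure~\ref{fig:td-O-Sab}).

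For $(\Sigma;\gamma,\alpha)$, the curves $c_1$ and $c_2$ are chosen disjoint from $\alpha$. So the surface diffeomorphism $(\tau_{c_1}^k\circ\tau_{c_2}^l)^{-1}$ fixes $\alpha$ and carries $(\Sigma;\tau_{c_1}^k\tau_{c_2}^l(\gamma),\alpha)$ to $(\Sigma;\gamma,\alpha)$ for $\calD_{0,0}$. That diagram can then be isotoped to the standard one, as in Figure~\ref{fig:td-O-Sca}.

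The pair $(\Sigma;\beta,\gamma)$ is harder, because the $c_i$ meet $\beta$. We would reduce it to the standard diagram $(\Sigma_{3,3};\delta,\epsilon)$ using the operations (i)--(v) of Figure~\ref{fig:opes}, proceeding as in Figure~\ref{fig:td-O-Sbc}:
\begin{itemize}
\item use operation (i) to discard winding of green curves around black disks, which absorbs the $\tau_{c_2}^l$ twisting;
\item drag the relevant holes along green curves, passing blue arcs via operation (ii);
\item finish with operations (iii) and (iv).
\end{itemize}
The new feature is $\tau_{c_1}^k$, which makes a green curve wind $k$ times. We plan to handle it by induction on $k$: one round of hole-dragging and handle slides should remove one twist and return a diagram of the same shape with $k-1$.

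\emph{Step 2: identify the manifold.} We apply the Kim--Miller algorithm to $\calD_{k,l}$ to get a Kirby diagram. Since $(g,k;p,b)=(3,2;0,3)$, we expect it to consist of 1-handles and 2-handles that cancel, leaving one $0$-handle and one $2$-handle.
\begin{itemize}
\item After cancellation, the $\tau_{c_1}^k$ twisting should become $k$ full twists on $k+1$ parallel strands together with a one-strand clasp. This is a diagram of $T_{k+1,k+2}$.
\item The $\tau_{c_2}^l$ twist should contribute $+l$ to the framing, since $c_2$ meets the relevant green curve once as a meridian.
\end{itemize}
Finally we compute the framing. We take the blackboard framing read off from the surface and correct it by the writhe created when untwisting. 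The target value is $k^2+3k+l+1=(k+1)(k+2)+l-1$, that is, the Seifert-type term $pq$ for $T_{p,p+1}$ with $p=k+1$, shifted by $l-1$. A useful consistency check is that $\partial$ should equal $S^3_{k^2+3k+l+1}(T_{k+1,k+2})$, which must be a manifold of the form $M(a_1,a_2,a_3)$ (Lemma~\ref{lem:obd} and Theorem~\ref{thm:lb-tg-chi2}).

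\emph{Main obstacle.} We expect the hardest part to be the general-$k$ verification in Step 1 for $(\Sigma;\beta,\gamma)$, together with the framing bookkeeping in Step 2. The figures only show small cases, so the argument has to be an induction that strips off one Dehn twist at a time. If that induction does not close cleanly, the fallback is to show the triple is standard more abstractly. One route is to check that $\beta$ and $\gamma$ determine a Heegaard splitting of $\#^{2}(S^1\times S^2)$ relative to the boundary, for example by verifying that the intersection pattern is a sum of $2$ dual pairs and one trivial pair after slides.
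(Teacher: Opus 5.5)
Your two-step plan is the paper's proof. The paper also treats $(\Sigma;\alpha,\beta)$ and $(\Sigma;\gamma,\alpha)$ as easy and standardizes $(\Sigma;\beta,\gamma)$ by a sequence of the moves (i)--(v), drawn for $(k,l)=(3,2)$. It then applies the Kim--Miller algorithm and handle moves to reach the $(k^2+3k+l+1)$-framed $T_{k+1,k+2}$. Your induction on $k$ is a way of making the general-$k$ picture argument explicit.

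Two of your side claims are wrong, and they would mislead you when carrying the plan out.

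\emph{The predicted knot diagram.} $k$ full twists on $k+1$ strands followed by the clasp $\sigma_1\cdots\sigma_k$ closes up to $(\sigma_1\cdots\sigma_k)^{k(k+1)+1}$, which is $T_{k+1,k^2+k+1}$. For $k=2$ that gives $T_{3,7}$, not $T_{3,4}$. Since $T_{k+1,k+2}$ is the closure of $\Delta^2\,\sigma_1\cdots\sigma_k$, you should expect a single full twist. The $k$-fold Dehn twist along $c_1$ should increase the number of strands passing through it, not the number of twists. The target framing being quadratic rather than cubic in $k$ is consistent with this.

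\emph{The fallback criterion.} Write $k=2$ here for the trisection parameter, not the twist count. The standard $(3,2;0,3)$ pair $(\delta,\epsilon)$ has $k-2p-b+1=2-0-3+1=0$ parallel pairs and $g+p+b-1-k=3$ dual pairs. ``Two dual pairs and one trivial pair'' is instead the standard $(3,3;0,3)$ pair, whose sectors have boundary $\#^3(S^1\times S^2)$. So that criterion contradicts the $\#^2(S^1\times S^2)$ you want. Moreover, recognizing the glued-up 3-manifold alone would not give slide-diffeomorphism to the standard diagram. For that you would need a Waldhausen-type uniqueness theorem for such relative splittings.
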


\begin{proof}
The proof is similar to that of Proposition~\ref{prop:2102rtd-unknot}.
First, we verify that $\calD_{k,l}$ is a $(3,2;0,3)$-relative trisection diagram.
It is easy to see that $(\Sigma; \alpha, \beta)$ and $(\Sigma; \gamma, \alpha)$ can be modified into the standard diagram, so we omit the details.
The sequence of moves modifying $(\Sigma;\gamma,\alpha)$ into the standard diagram is shown in Figure~\ref{fig:td-Sbc-Tpp+1}.
\begin{figure}[!tbp]
\centering
\includegraphics[scale=1]{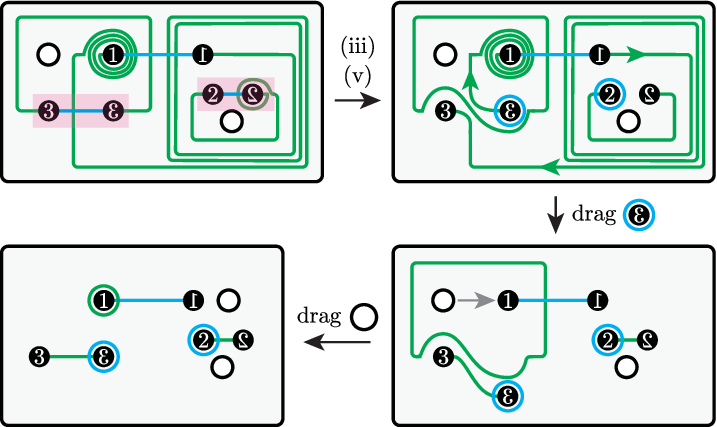}
\caption{Surface diffeomorphisms and slides of curves modifying $(\Sigma;\beta, \gamma)$ into the standard diagram (for the case $k=3$ and $l=2$).}
\label{fig:td-Sbc-Tpp+1}
\end{figure}

Next, we prove that the diagram $\calD_{k,l}$ represents the $(k^2+3k+l+1)$-trace of $T_{k+1,k+2}$.
Applying the algorithm given in \cite[Section~2.2]{KimMil20}, we obtain a handlebody diagram of the $4$-manifold induced by $\calD_{k,l}$, which is shown in the first diagram of Figure~\ref{fig:kd-Tpp+1}.
By the subsequent handle moves shown in the figure, we arrive at the desired diagram.
\begin{figure}[!htbp]
\centering
\includegraphics[scale=1]{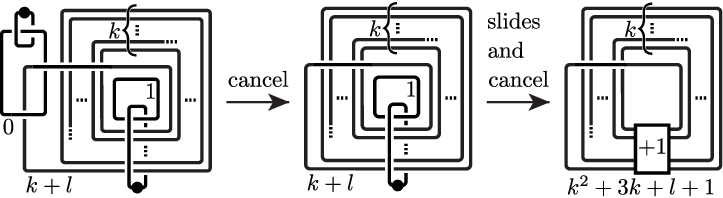}
\caption{Handle moves of $X_{k^2+3k+l+1}(T_{k+1,k+2})$.}
\label{fig:kd-Tpp+1}
\end{figure}
\end{proof}

The next proposition gives a genus-$2$ relative trisection diagrams of the $5$-trace of the trefoil knot.

\begin{proposition}\label{prop:2102rtd-trefoil}
Let $\calD$ be the diagram in Figure~\ref{fig:td-tref}.
Then, $\calD$ is a $(2,1;0,2)$-relative trisection diagram of the $5$-trace of the right-handed trefoil knot $T_{2,3}$.
Thus, the mirror image of $\calD$ is a $(2,1;0,2)$-relative trisection diagram of the $-5$-trace of the left-handed trefoil knot $-T_{2,3}$.
\end{proposition}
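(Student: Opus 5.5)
The argument follows the template of the proofs of Propositions~\ref{prop:2102rtd-unknot} and \ref{prop:3203rtd-torus}. There are two tasks: check that $\calD$ is a $(2,1;0,2)$-relative trisection diagram, and identify the $4$-manifold it determines as $X_5(T_{2,3})$. The statement about the mirror image is then formal. Reversing the orientation of $\Sigma$ turns a relative trisection diagram of $X$ into one of $-X$, and $-X_5(T_{2,3})\cong X_{-5}(-T_{2,3})$.

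For the first task, $\Sigma\cong\Sigma_{2,2}$ and each of $\alpha,\beta,\gamma$ consists of $g-p=2$ curves. I would show that each of $(\Sigma;\alpha,\beta)$, $(\Sigma;\beta,\gamma)$ and $(\Sigma;\gamma,\alpha)$ is slide-diffeomorphic to the standard diagram of Figure~\ref{fig:td-gkpb} with $(g,p,b)=(2,0,2)$. This means reducing each pair to one dual pair meeting in a single point together with one pair of parallel curves separating the two boundary components appropriately. The tools are the operations (i)--(v) of Figure~\ref{fig:opes}, Dehn twists of the surface, and handle slides, exactly as in Figures~\ref{fig:td-O-Sca} and \ref{fig:td-O-Sbc}.

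I expect $\calD$ to be built from the unknot diagram $\calD_m$ by twisting $\gamma$ along an extra curve, in analogy with how $\calD_{k,l}$ twists along $c_1$ and $c_2$. If so, two of the three pairs should be visibly standard, or standard after undoing the twists. The remaining pair, presumably $(\Sigma;\beta,\gamma)$, will need a genuine sequence of slides and hole-dragging, modelled on Figure~\ref{fig:td-Sbc-Tpp+1}.

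For the second task, I would apply the Kim--Miller algorithm \cite[Section~2.2]{KimMil20} to $\calD$. This produces a Kirby diagram: $1$-handles coming from the $\alpha$-compression body and the boundary, together with $2$-handles given by the $\gamma$ curves, framed by the surface, inside the $\alpha\beta$ Heegaard splitting picture. A Euler characteristic check gives $\chi=2-3+0+4-1=2$, which is consistent with a single net $2$-handle. I would then cancel the $1$-handles against $2$-handles and slide the remaining handles until only a single $2$-handle is left. The final step is to recognize its attaching circle as the right-handed trefoil and to compute the framing as $5$, for example via the linking form, or by checking that the boundary is $S^3_5(T_{2,3})\cong -L(5,4)$, which agrees with Theorem~\ref{thm:lb-tg-chi2} since $b=2$ forces an $L(p,1)$ boundary.

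The main obstacle is this handle calculus. Keeping track of framings through the slides is delicate, and it takes care to see that the surviving knot is the trefoil rather than an unknot with a different framing. Both $S^3_5(T_{2,3})$ and $S^3_5(U)$ are lens spaces of order $5$, so the boundary cannot distinguish them. As a secondary check I would compute $\pi_1$ or the intersection form, and note that $X_5(T_{2,3})$ is not diffeomorphic to $X_5(U)$ as a trace, so the knot really has to be identified from the diagram itself.
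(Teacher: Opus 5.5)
Your proposal follows the paper's proof: verify that the three pairs are standard using the operations of Figure~\ref{fig:opes} (the paper handles the nontrivial pair as in Figure~\ref{fig:td-Sbc-Tpp+1}), then apply the Kim--Miller algorithm and reduce by handle moves to the $5$-framed right-handed trefoil, with the mirror statement coming from orientation reversal as you describe. One correction to your side checks: $\pi_1$ and the intersection form cannot tell the trefoil from the unknot here, since $X_5(T_{2,3})$ and $X_5(U)$ are both simply connected with form $(5)$, whereas the \emph{oriented} boundary does separate them, because $S^3_5(T_{2,3})\cong L(5,1)$ while $S^3_5(U)\cong -L(5,1)$ (Kronheimer--Mrowka--Ozsv\'ath--Szab\'o).
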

\begin{figure}[!htbp]
\centering
\includegraphics[scale=1]{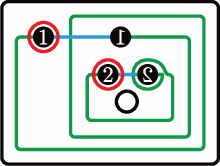}
\caption{$\calD$.}
\label{fig:td-tref}
\end{figure}

\begin{proof}
First, we verify that $\calD$ is a $(2,1;0,2)$-relative trisection diagram.
It is easy to see that $(\Sigma; \alpha, \beta)$ and $(\Sigma; \gamma, \alpha)$ can be modified into the standard diagram.
We can modify $(\Sigma;\gamma,\alpha)$ into the standard diagram using the similar operation as in Figure~\ref{fig:td-Sbc-Tpp+1}.

Next, we prove that the diagram $\calD$ represents the $5$-trace of $T_{2,3}$.
Applying the algorithm given in \cite[Section~2.2]{KimMil20}, we obtain a handlebody diagram of the $4$-manifold induced by $\calD$, which is shown in the first diagram of Figure~\ref{fig:kd-tref}.
By the subsequent handle moves shown in the figure, we arrive at the desired diagram.
\begin{figure}[!htbp]
\centering
\includegraphics[scale=1]{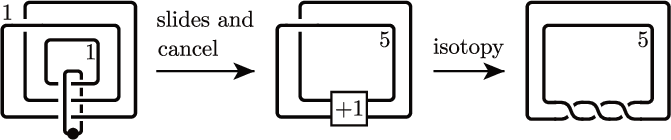}
\caption{Handle moves of $X_5(T_{2,3})$.}
\label{fig:kd-tref}
\end{figure}
\end{proof}

We are now ready to prove the main result of this subsection.

\begin{proof}[Proof of Theorem~\ref{thm:tg-Tp,p+1}]
Propositions~\ref{prop:3203rtd-torus} and \ref{prop:2102rtd-trefoil} imply that $g(X_m(T_{p,p+1})) \leq 3$ for any $m,p\in \ZZ$ and $g(X_{5}(T_{2,3})) \leq 2$.

To obtain lower bounds of the trisection genus, we apply Theorem~\ref{thm:lb-tg-chi2}.
For any $m\in\ZZ$ and $p\in\ZZ_{\geq2}$, we have $g(X_m(T_{p,p+1})) \geq 2$ since the boundary $S^3_{m}(T_{p,p+1})$ is not homeomorphic to $S^3$.
Furthermore, if $(p,m) \neq (2,5)$, then $g(X_m(T_{p,p+1})) \geq 3$ since $S^3_m(T_{p,p+1})$ is not homeomorphic to any lens space $L(a,1)$. 
Indeed, according to the results of \cite[Theorem~1.1]{KroMroOzsSza07} and \cite[Theorem~9]{Tan09}, if $S^3_m(K) \cong L(a,1)$ for a knot $K$ and an integer $m$, then the pair $(K,m)$ must be one of the following:
\begin{equation*}
(O,m) \text{ with } m \in \ZZ, \quad (T_{2,3},5), \quad \text{or } (-T_{2,3},-5).
\end{equation*}
\end{proof}


We also prove the following theorem.

\maintoruspq*

\begin{proof}
Let $\calD_{0,0,0} = (\Sigma, \alpha, \beta, \gamma)$ be the left diagram in Figure~\ref{fig:td-Tpq}.
For integers $j$, $k$, and $l$, we define
\begin{equation*}
\calD_{j,k,l}=(\Sigma; \alpha, \beta, {\tau_{c_1}^j}\circ{\tau_{c_2}^k}\circ{\tau_{c_3}^l}(\gamma)),
\end{equation*}
where ${\tau_{c_1}^j}\circ{\tau_{c_2}^k}\circ{\tau_{c_3}^l}(\gamma)$ is obtained from $\gamma$ in $\mathcal{D}_{0,0,0}$ by applying Dehn twists along the curves $c_1$, $c_2$, and $c_3$ (see the right side of Figure~\ref{fig:td-Tpq}).
We prove that $\calD_{j,k,l}$ is a $(4,3;0,4)$-relative trisection diagram of the $(j-2)k^2+(2j-3)k+(j-2+l)$-trace of the $(k+1, (j-2)(k+1)+1)$-torus knot.
\begin{figure}[!htbp]
\centering
\includegraphics[scale=1]{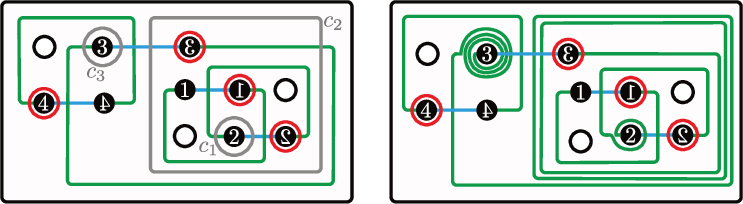}
\caption{Left: $\calD_{0,0,0}$. Right: $\calD_{j,k,l}$ for the case $(j,k,l)=(1,2,3)$.}
\label{fig:td-Tpq}
\end{figure}

First, we verify that $\calD_{j,k,l}$ is a $(4,3;0,4)$-relative trisection diagram.
It is easy to see that $(\Sigma; \alpha, \beta)$ and $(\Sigma; \gamma, \alpha)$ can be modified into the standard diagram, so we omit the details.
Regarding $(\Sigma; \beta, \gamma)$, it can also be modified into the standard diagram by combining the operations shown in Figures~\ref{fig:td-O-Sbc} and \ref{fig:td-Sbc-Tpp+1}.

Next, we prove that the diagram $\calD_{j,k,l}$ represents the $(j-2)k^2+(2j-3)k+(j-2+l)$-trace of the $(k+1, (j-2)(k+1)+1)$-torus knot.
Applying the algorithm given in \cite[Section~2.2]{KimMil20}, we obtain a handlebody diagram of the $4$-manifold induced by $\calD_{j,k,l}$, which is shown in the first diagram of Figure~\ref{fig:kd-Tpq}.
By the subsequent handle moves shown in the figure, we arrive at the desired diagram.
\begin{figure}[!tbp]
\centering
\includegraphics[scale=1]{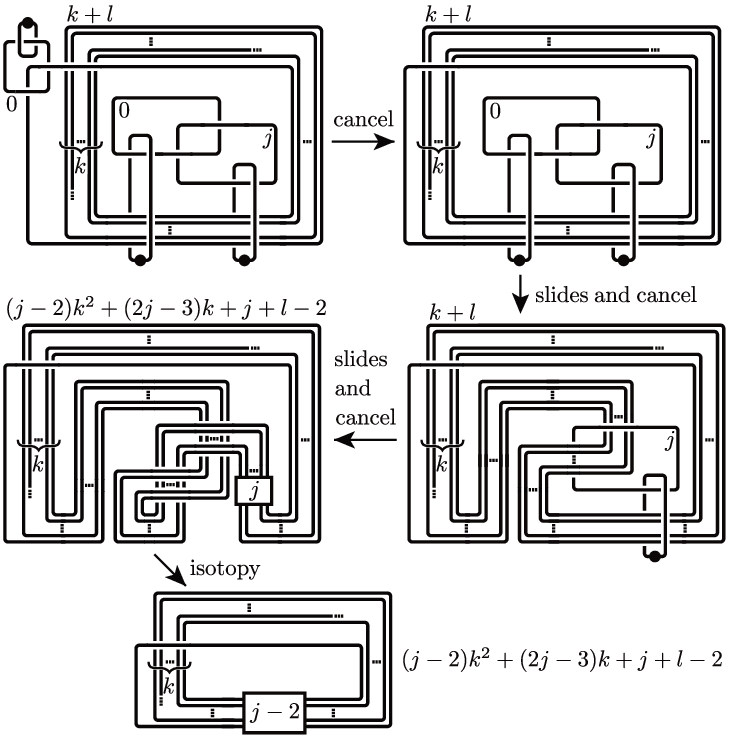}
\caption{Handle moves of the $(j-2)k^2+(2j-3)k+(j-2+l)$-trace of the $(k+1, (j-2)(k+1)+1)$-torus knot.}
\label{fig:kd-Tpq}
\end{figure}
\end{proof}

\subsection{$(-2, 3, 2n-1)$-pretzel knots}

In this subsection, we consider the trisection genus of the traces of the $(-2,3,2n-1)$-pretzel knots shown in Figure~\ref{fig:K_n}.
\begin{figure}[!htbp]
\centering
\includegraphics[scale=1]{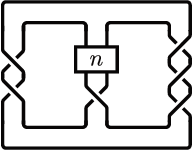}
\caption{$P(-2,3,2n-1)$}
\label{fig:K_n}
\end{figure}

\mainpretzel*

First, we give a genus-$4$ relative trisection diagram that realizes the knot traces of $P(-2, 3, 2n-1)$.

\begin{proposition}\label{prop:4304rtd-pretzel}
Let $\calD_{0,0} = (\Sigma, \alpha, \beta, \gamma)$ be the diagram shown in Figure~\ref{fig:td-K_n}.
For integers $k$ and $l$, we define
\begin{equation*}
\calD_{k,l}=(\Sigma, \alpha, \beta, {\tau_{c_1}^k}\circ{\tau_{c_2}^l}(\gamma)),
\end{equation*}
where ${\tau_{c_1}^k}\circ{\tau_{c_2}^l}(\gamma)$ is obtained from $\gamma$ in $\mathcal{D}_{0,0}$ by applying Dehn twists along the curves $c_1$ and $c_2$ (see Figure~\ref{fig:td-K_n-lk}).
Then $\calD_{k,l}$ is a $(4,3;0,4)$-relative trisection diagram of the $(k+4l+4)$-trace of the knot $P(-2,3,2l-1)$.
\begin{figure}[!tbp]
\centering
\includegraphics[scale=1]{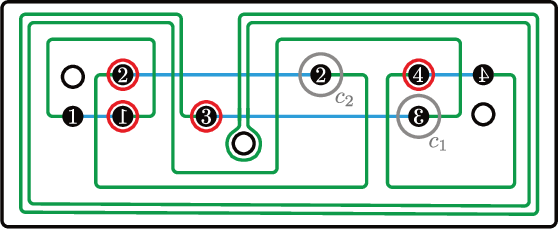}
\caption{$\calD_{0,0}$.}
\label{fig:td-K_n}
\end{figure}
\begin{figure}[!tbp]
\centering
\includegraphics[scale=1]{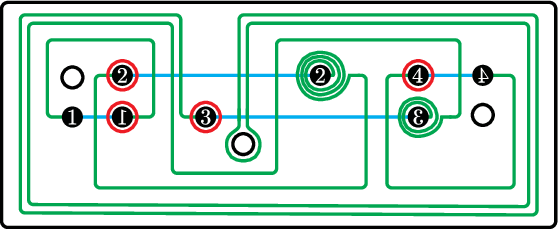}
\caption{$\calD_{k,l}$ for the case $(k,l)=(2,3)$.}
\label{fig:td-K_n-lk}
\end{figure}
\end{proposition}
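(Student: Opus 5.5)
The proof will follow the template of Propositions~\ref{prop:2102rtd-unknot}, \ref{prop:3203rtd-torus} and the proof of Theorem~\ref{thm:tg-Tp,pq+1}. It has two parts: first show that $\calD_{k,l}$ is a $(4,3;0,4)$-relative trisection diagram, and then identify the $4$-manifold it determines. Since $g=4$, $p=0$ and $b=4$, each of $\alpha,\beta,\gamma$ consists of $g-p=4$ curves on $\Sigma\cong\Sigma_{4,4}$. We must check that each of the pairs $(\Sigma;\alpha,\beta)$, $(\Sigma;\beta,\gamma)$ and $(\Sigma;\gamma,\alpha)$ is slide-diffeomorphic to the standard diagram of Figure~\ref{fig:td-gkpb}.

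For $(\Sigma;\alpha,\beta)$ the Dehn twists play no role, and the check reduces to inspecting Figure~\ref{fig:td-K_n}. For $(\Sigma;\gamma,\alpha)$ I would first check that the twisting curves $c_1$ and $c_2$ can be chosen disjoint from the $\alpha$ curves, or at least that they bound or cobound in a way compatible with $\alpha$. If so, applying the surface diffeomorphism $\tau_{c_2}^{-l}\circ\tau_{c_1}^{-k}$ removes the twists, exactly as in Figure~\ref{fig:td-O-Sca}. The problem then reduces to the case $k=l=0$, which is handled by isotopies and operations (i)--(v) of Figure~\ref{fig:opes}.

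The pair $(\Sigma;\beta,\gamma)$ is where $k$ and $l$ genuinely interact with the other curves, and I expect it to be the main obstacle. The plan is to combine the hole-dragging argument of Figure~\ref{fig:td-O-Sbc} with the moves of Figure~\ref{fig:td-Sbc-Tpp+1}. Concretely, operation (iii) simplifies a local region. The disks (boundary components) are then dragged along the twisted green curves, using (ii) to pass through blue curves and (i) to absorb winding numbers; this is the step that makes the argument uniform in $k$ and $l$. Finally (iv) is applied to reach the standard form. The delicate point is that the $c_1$- and $c_2$-twists must be unwound one at a time, checking that unwinding one does not spoil the simplification already achieved for the other. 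If this fails, I would try an induction on $|l|$ instead, showing that $\calD_{k,l}$ and $\calD_{k,l\pm1}$ differ by a move that preserves standardness of $(\beta,\gamma)$.

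For the second part, apply the Kim--Miller algorithm \cite[Section~2.2]{KimMil20} to $\calD_{k,l}$. This produces a Kirby diagram with one $0$-handle, some $1$-handles coming from the $\alpha$-handlebody, and $2$-handles coming from $\beta$ and $\gamma$. We then cancel $1$-handle/$2$-handle pairs. The $k$ twists along $c_1$ should appear as full twists of a single strand relative to the framing curve, contributing $+k$ to the framing. The $l$ twists along $c_2$ should appear as $l$ full twists on two parallel strands, producing the third pretzel column with $2l-1$ crossings and a framing contribution of $4l$ (four crossings per full twist on two strands, counted in the blackboard framing). Tracking the framing through the handle slides then gives the coefficient $k+4l+4$ on the knot $P(-2,3,2l-1)$. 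Bookkeeping these framing changes through the slides is routine but error-prone. I would verify it independently by computing the linking form or $|H_1(\partial X)|=|k+4l+4|$ directly from the diagram.
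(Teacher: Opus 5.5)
Your proposal matches the paper's proof: $(\Sigma;\alpha,\beta)$ and $(\Sigma;\gamma,\alpha)$ are treated as easy, $(\Sigma;\beta,\gamma)$ is brought to standard form by an explicit sequence of surface diffeomorphisms and slides, and the $4$-manifold is identified by running the Kim--Miller algorithm and cancelling handles down to the $(k+4l+4)$-framed $P(-2,3,2l-1)$. One small correction to your heuristic: a full twist on two coherently oriented strands adds only two crossings, and the framing change of $4$ per twist is the twisting contribution $\mathrm{lk}^2=2^2$; your coefficient $4l$ is nonetheless correct.
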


\begin{proof}
First, we verify that $\calD_{k,l}$ is a $(4,3;0,4)$-relative trisection diagram.
It is easy to see that $(\Sigma; \alpha, \beta)$ and $(\Sigma; \gamma, \alpha)$ can be modified into the standard diagram, so we omit the details.
The sequence of moves modifying $(\Sigma;\gamma,\alpha)$ into the standard diagram is shown in Figure~\ref{fig:td-K_n-Sbc}.
\begin{figure}[!tbp]
\centering
\includegraphics[scale=1]{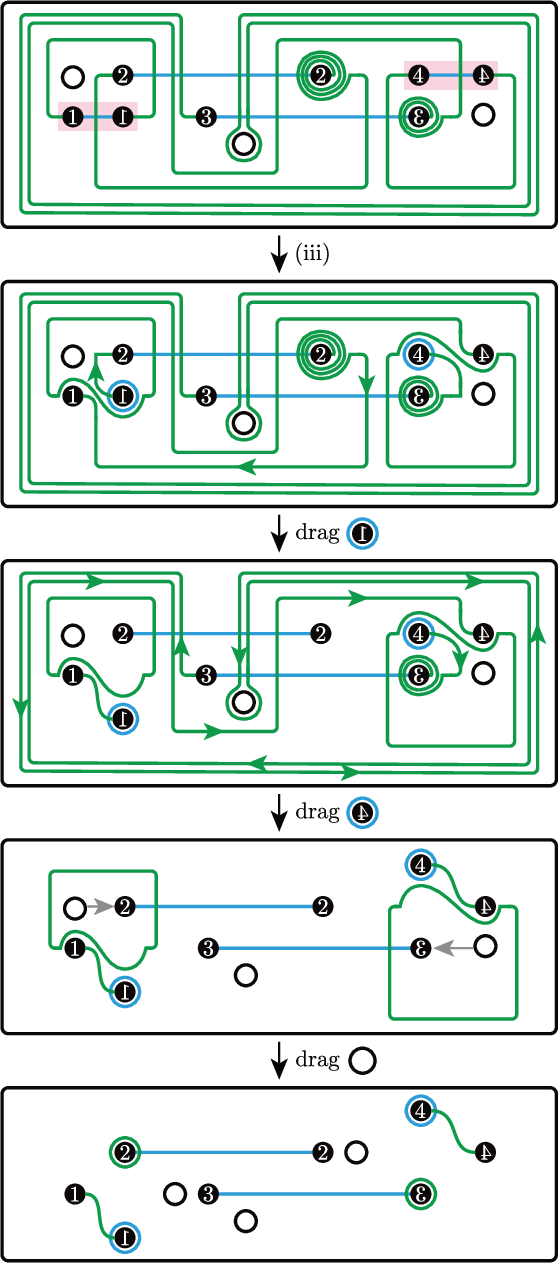}
\caption{Surface diffeomorphisms and slides of curves modifying $(\Sigma;\beta, \gamma)$ into the standard diagram (for the case $k=2$ and $l=3$).}
\label{fig:td-K_n-Sbc}
\end{figure}

Next, we prove that the diagram $\calD_{k,l}$ represents the $(k+4l+4)$-trace of $P(-2,3,2l-1)$.
Applying the algorithm given in \cite[Section~2.2]{KimMil20}, we obtain a handlebody diagram of the $4$-manifold induced by $\calD_{k,l}$, which is shown in the first diagram of Figure~\ref{fig:kd-K_n}.
By the subsequent handle moves shown in the figure, we arrive at the desired diagram.
\end{proof}
\begin{figure}[!tbp]
\centering
\includegraphics[scale=1]{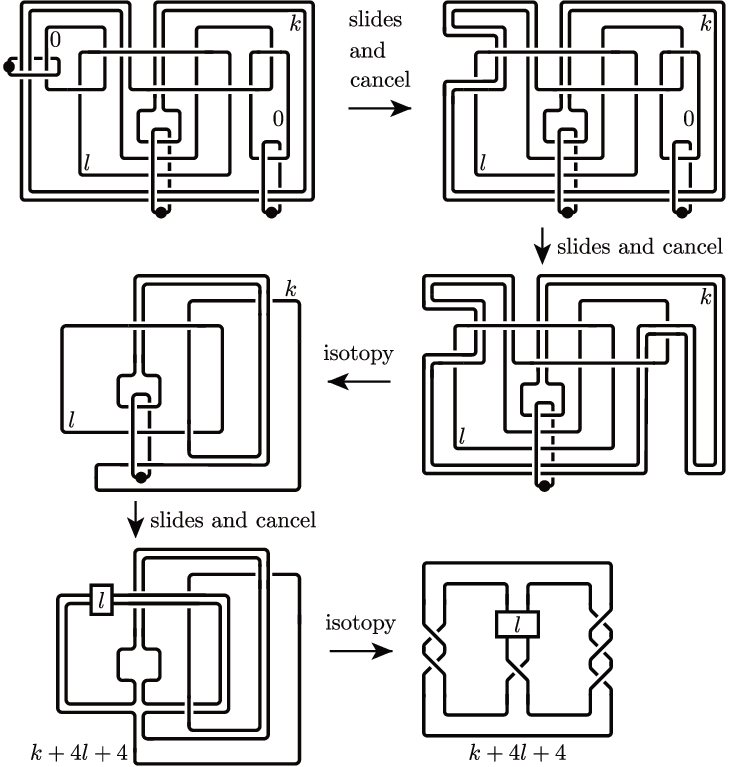}
\caption{Handle moves of the $(k+4l+4)$-trace of $P(-2,3,2l-1)$.}
\label{fig:kd-K_n}
\end{figure}

\begin{proof}[Proof of Theorem~\ref{thm:pretzel}]
The first assertion follows from Proposition~\ref{prop:4304rtd-pretzel}.

If $n\notin\{1,2,3\}$, the pretzel knot $P(-2, 3, 2n-1)$ is hyperbolic (see e.g. \cite[Section~2.3]{Kaw96}).
By Thurston's hyperbolic surgery theorem~\cite{Thu22}, the boundary $S^3_m(P(-2, 3, 2n-1))$ is a hyperbolic $3$-manifold for all but finitely many $m \in \ZZ$.
Since hyperbolic $3$-manifolds are not homeomorphic to $M(a_1,a_2,a_3)$, Theorem~\ref{thm:lb-tg-chi2} implies the second assertion.

In the case $n=2$, the knot $P(-2, 3, 3)$ is equivalent to the torus knot $T_{3,4}$.
Thus, the third assertion follows from Theorem~\ref{thm:tg-Tp,p+1}.
\end{proof}

\subsection{Figure-eight knot}

The figure-eight knot $4_1$ also admits traces of trisection genus $4$.
We prove the next theorem.

\mainfigureeight*

\begin{proposition}\label{prop:rtd-4_1}
Let $\calD_0 = (\Sigma, \alpha, \beta, \gamma)$ be the diagram shown in Figure~\ref{fig:td-4_1-0}.
For each integer $m$, we define $\calD_m = (\Sigma, \alpha, \beta, \tau_c^m(\gamma))$,
where $\tau_c^m(\gamma)$ is obtained from $\gamma$ in $\mathcal{D}_0$ by applying an $m$-fold Dehn twist $\tau_c^m$ along the curve $c$ (see Figure~\ref{fig:td-4_1-m}).
Then $\calD_m$ is a $(4,3;0,4)$-relative trisection diagram of the $(m-3)$-trace of the figure-eight knot $4_1$.
\end{proposition}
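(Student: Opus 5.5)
The proof will have the same two-part structure as those of Propositions~\ref{prop:2102rtd-unknot}, \ref{prop:3203rtd-torus} and \ref{prop:4304rtd-pretzel}. First I will show that $\calD_m$ is a $(4,3;0,4)$-relative trisection diagram. Then I will apply the Kim--Miller algorithm~\cite[Section~2.2]{KimMil20} to identify the resulting $4$-manifold with $X_{m-3}(4_1)$. The surface $\Sigma$ is $\Sigma_{4,4}$, and each of $\alpha,\beta,\gamma$ consists of $g-p=4$ curves.

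For the first part, I will check each of the three pairs against the standard diagram of Figure~\ref{fig:td-gkpb}.

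\begin{itemize}
\item The pair $(\Sigma;\alpha,\beta)$ does not involve the twist $\tau_c^m$. I expect it to be standard up to isotopy and a surface diffeomorphism, exactly as in Figure~\ref{fig:td-O-Sab}.
\item For $(\Sigma;\gamma,\alpha)$, I will use that $c$ is chosen disjoint from the $\alpha$ curves, as in the earlier constructions. Then $\tau_c^{-m}$ is a surface diffeomorphism fixing $\alpha$, so it carries $(\Sigma;\tau_c^m(\gamma),\alpha)$ to $(\Sigma;\gamma,\alpha)$. This reduces the check to $m=0$, as in Figure~\ref{fig:td-O-Sca}.
\item The pair $(\Sigma;\beta,\tau_c^m(\gamma))$ genuinely depends on $m$. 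Here I will use operations (i)--(v) of Figure~\ref{fig:opes}, following the dragging argument of Figures~\ref{fig:td-O-Sbc} and \ref{fig:td-Sbc-Tpp+1}.
\end{itemize}

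For the third bullet, the twisting of $\gamma$ around $c$ should wind around a single boundary hole (black disk). Operation (i) then discards that winding, making the diagram independent of $m$. After that, I will drag the relevant disks across the $\beta$ curves using (ii), and finish with (iii) and (iv) to reach the standard form.

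For the second part, the Kim--Miller algorithm gives a Kirby diagram from $\calD_m$, and I will reduce it to a single $2$-handle. With $k=3$ the diagram starts with three $1$-handles and the $2$-handles coming from $\gamma$, together with the corresponding cancellation data. I will cancel the $1$-handles against $2$-handles that pass through them geometrically once. Each cancellation slides the remaining $2$-handle over the cancelled ones. The remaining attaching curve should then isotope to the figure-eight knot, since the twisted regions of $\gamma$ produce its two twist boxes.

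The framing is tracked by noting that $\tau_c^m$ adds $m$ full twists to one strand with no linking change elsewhere. So the framing equals $m$ plus a constant, and I will determine that constant by computing the case $m=0$ directly as $-3$.

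The main obstacle is the third bullet: showing that $(\Sigma;\beta,\tau_c^m(\gamma))$ is slide-diffeomorphic to the standard genus-$4$ diagram for every $m$. The dragging moves have to be organized uniformly in $m$, and the key point is that all $m$-dependence is absorbed by operation (i). The handle calculus is lengthy but routine by comparison, apart from tracking the framing constant $-3$ carefully through the slides.
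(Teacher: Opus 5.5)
Your proposal takes essentially the same route as the paper. The paper also treats $(\Sigma;\alpha,\beta)$ and $(\Sigma;\gamma,\alpha)$ as easy, handles the $m$-dependent pair $(\Sigma;\beta,\tau_c^m(\gamma))$ via the moves of Figure~\ref{fig:td-4_1-Sbc}, and then applies the Kim--Miller algorithm followed by the handle moves of Figure~\ref{fig:kd-4_1}; the one difference is that it tracks the framing $m-3$ directly for general $m$ rather than through your $m$-plus-a-constant bookkeeping. One small correction: the labeled black disks in the planar pictures are paired ends of tubes (Figure~\ref{fig:diskpair}), not boundary components of $\Sigma$.
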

\begin{figure}[!tbp]
\centering
\includegraphics[scale=1]{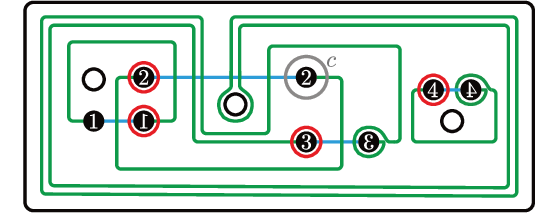}
\caption{$\calD_0$.}
\label{fig:td-4_1-0}
\end{figure}
\begin{figure}[!tbp]
\centering
\includegraphics[scale=1]{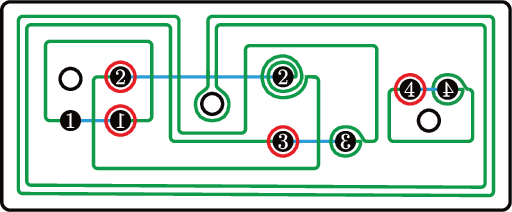}
\caption{$\calD_m$ for the case $m=2$.}
\label{fig:td-4_1-m}
\end{figure}

\begin{proof}
First, we verify that $\calD_{m}$ is a $(4,3;0,4)$-relative trisection diagram.
It is easy to see that $(\Sigma; \alpha, \beta)$ and $(\Sigma; \gamma, \alpha)$ can be modified into the standard diagram, so we omit the details.
The sequence of moves modifying $(\Sigma;\gamma,\alpha)$ into the standard diagram is shown in Figure~\ref{fig:td-4_1-Sbc}.
\begin{figure}[!tbp]
\centering
\includegraphics[scale=1]{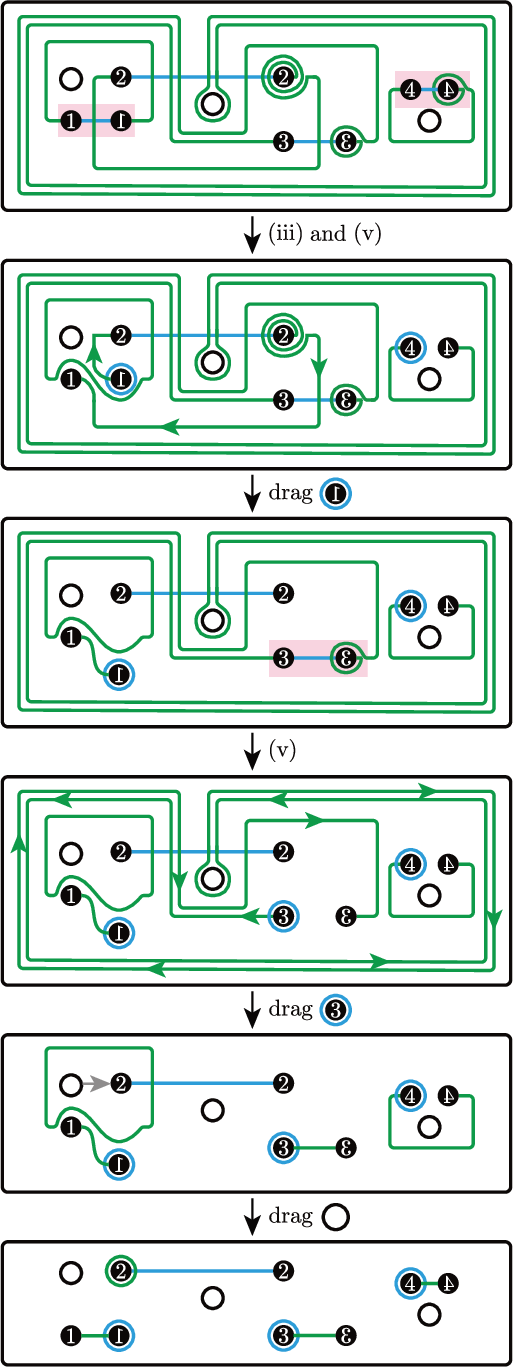}
\caption{Surface diffeomorphisms and slides of curves modifying $(\Sigma;\beta, \gamma)$ into the standard diagram (for the case $m=2$).}
\label{fig:td-4_1-Sbc}
\end{figure}

Next, we prove that the diagram $\calD_{m}$ represents the $(m-3)$-trace of the figure-eight knot.
Applying the algorithm given in \cite[Section~2.2]{KimMil20}, we obtain a handlebody diagram of the $4$-manifold induced by $\calD_{m}$, which is shown in the first diagram of Figure~\ref{fig:kd-4_1}.
By the subsequent handle moves shown in the figure, we arrive at the desired diagram.
\begin{figure}[!tbp]
\centering
\includegraphics[scale=1]{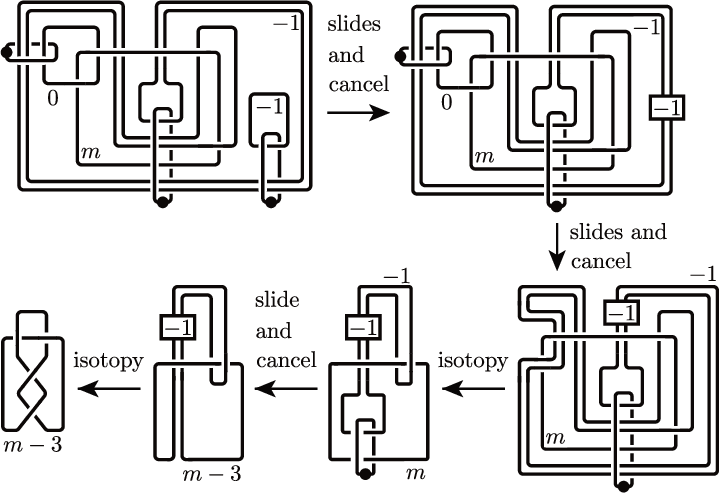}
\caption{Handle moves of the $(m-3)$-trace of the figure-eight knot.}
\label{fig:kd-4_1}
\end{figure}
\end{proof}

\begin{proof}[Proof of Theorem~\ref{thm:tg-fig8}]
By Proposition~\ref{prop:rtd-4_1}, we have $g(X_m(4_1)) \leq 4$ for any integer $m$.
The boundary $S^3_{m}(4_1)$ is a Seifert fibered space $M(a_1,a_2,a_3)$ (not a lens space) when $m \in \{ \pm1, \pm2, \pm3 \}$, a toroidal manifold when $m \in \{0, \pm4\}$, and a hyperbolic manifold otherwise (e.g.~\cite[Theorem~4.7]{Thu22}).
Therefore, by Theorem~\ref{thm:lb-tg-chi2}, we obtain $g(X_m(4_1)) \geq 3$ for any $m \in \mathbb{Z}$, and moreover $g(X_m(4_1)) \geq 4$ when $m \notin \{ \pm1, \pm2, \pm3 \}$.
\end{proof}

\begin{remark}\label{rem:4_1Seifert}
When $m \in \{\pm 1, \pm 2, \pm 3\}$, the boundary $S^3_{m}(4_1)$ is a Seifert fibered space.
Specifically,
\begin{equation*}
S^3_{\pm 1}(4_1) \cong S^2(2,3,7), \quad
S^3_{\pm 2}(4_1) \cong S^2(2,4,5), \quad
S^3_{\pm 3}(4_1) \cong S^2(3,3,4).
\end{equation*}
By Lemma~\ref{lem:gkpb}, in these cases, $X_m(4_1)$ could potentially admit a $(3,2;0,3)$-relative trisection.
However, we have not been able to show the existence or non-existence of such a trisection.
\end{remark}

\section{Proof of main results}\label{sect:proofmain}

In this section, we give the proofs of Theorems~\ref{thm:g=12}, \ref{thm:g=3}, \ref{thm:g=4}, and \ref{thm:g>N}.

\maingonetwo*

\begin{proof} 
We have $\chi(X_m(K))=2$ for any knot $K$ and $m\in\ZZ$, since $X_m(K)$ admits a handle decomposition consisting of a single $0$-handle and a single $2$-handle.
By Theorem~\ref{thm:lb-tg-chi2}, $g(X_m(K)) \geq 1$.

Suppose that $g(X_m(K)) = 1$.
By Lemma~\ref{lem:gkpb}, $X_m(K)$ admits a $(1;0;0,1)$-relative trisection.
Applying Lemma~\ref{lem:obd}, we see that the boundary $S^3_m(K)$ admits an open book decomposition with page $\Sigma_{0,1}$, and hence it is homeomorphic to $S^3$.
By the Property~P conjecture (\cite{GorLue89}), $K$ is the unknot and $m = \pm1$.
Conversely, by Proposition~\ref{thm:tg-unknot}, we obtain $g(X_{\pm1}(U)) = 1$.

Suppose that $g(X_m(K)) = 2$.
By Lemma~\ref{lem:gkpb}, $X_m(K)$ admits a $(2;1;0,2)$-relative trisection.
Applying Lemma~\ref{lem:obd}, we see that the boundary $S^3_m(K)$ admits an open book decomposition with page $\Sigma_{0,2}$, and hence it is homeomorphic to $\pm L(m,1)$.
By the results of Kronheimer, Mrowka, Ozsváth, and Szabó~\cite[Theorem~1.1]{KroMroOzsSza07} and Tange~\cite[Theorem~9]{Tan09}, the pair $(K,m)$ must be one of the following:
%
\begin{equation*}
(U,m) \text{ with } m\in\ZZ, \quad (T_{2,3},5), \quad \text{or } (-T_{2,3},-5).
\end{equation*}
Conversely, by Proposition~\ref{thm:tg-unknot} and Theorem~\ref{thm:tg-Tp,p+1}, we obtain $g(X_{m}(U)) = g(X_{5}(T_{2,3})) = g(X_{-5}(-T_{2,3})) = 2$, where $m\neq\pm1$.
\end{proof}

\maingthree*

\begin{proof} 
The first statement follows from Theorem~\ref{thm:tg-Tp,p+1}.
The second one is a consequence of Theorem~\ref{thm:lb-tg-chi2}.
\end{proof}

\maingfour*

\begin{proof} 
The first statement follows from Theorem~\ref{thm:pretzel}.
Note that the family $\{P(-2, 3, 2n-1)\}_{n \in \ZZ}$ consists of mutually distinct knots (see e.g. \cite[Theorem~2.3.1]{Kaw96}).
%

Let $K$ be a hyperbolic knot.
According to Thurston's hyperbolic Dehn surgery theorem~\cite[Theorem 5.8.2]{Thu22}, for all but finitely many $m\in\ZZ$, the boundary $\partial{X_m(K)}$ is hyperbolic and therefore not homeomorphic to $M(a_1, a_2, a_3)$ for any $a_1, a_2, a_3 \in \ZZ$.
By Theorem~\ref{thm:lb-tg-chi2}, we have $g(X_m(K)) \geq 4$ for all such $m$.
\end{proof}

Finally, we prove the following theorem.

\mainlargen*

\begin{proof}
Take an arbitrary positive integer $N$.
Let $K$ be the pretzel knot $P(p_1, p_2, \ldots, p_r)$ with $r$ strands, where the integers $r, p_1, p_2, \ldots, p_r$ satisfy the following conditions:
\begin{itemize}
\item $r$ is an odd integer and $r \geq N$,
\item for any $i \in \{1, 2, \ldots, r\}$, $p_i$ is an odd integer and $p_i = p_{r-i+1}$,
\item $\gcd(p_1, p_2, \ldots, p_r) \neq 1$.
\end{itemize}
For such a knot, it follows from Lemma~2 in \cite{BoiLusMor94} that if $m$ is even, the Heegaard genus of $S^3_{m}(K)$ is equal to $r$.
For example, one may take the pretzel knot $P(3,3,\ldots,3)$ with an odd number of strands at least $N$ (see Figure~\ref{fig:pretzel333}), which satisfies all of the above conditions. 
Then, for every even integer $m$, the Heegaard genus of $S^3_m(K)$ is at least $N$.
\begin{figure}[!htbp]
\centering
\includegraphics[scale=1]{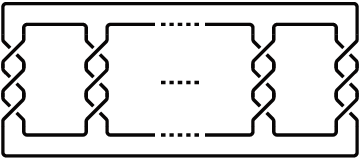}
\caption{The pretzel knot $P(3,3,\ldots,3)$ with an odd number of strands at least $N$.}
\label{fig:pretzel333}
\end{figure}

Let $m$ be any even integer.
Suppose for contradiction that $g(X_m(K)) \leq N$.
Then there exists a $(N,k;p,b)$-relative trisection $\calT$ of $X_m(K)$.
By the definition of relative trisections and Proposition~\ref{prop:Euler-rel}, we have
\begin{gather}
k,p \geq 0, \quad b \geq 1, \notag\\
2p+b-1 \leq k \leq N+p+b-1, \label{eq:rt-def22} \\
N-3k+3p+2b-1 = 2. \label{eq:rt-Euler22}
\end{gather}

By Lemma~\ref{lem:obd}, $\calT$ induces an open book decomposition of the boundary $S^3_{m}(K)$ with page $\Sigma_{p,b}$. Thus, $S^3_{m}(K)$ also admits a Heegaard splitting of genus $2p+b-1$. 

We derive a contradiction by showing that $2p+b-1 \leq N-1$.
Solving the equation~\eqref{eq:rt-Euler22} for $k$, we obtain
\begin{equation*}
k = p-1 + \frac{2b+N}{3}.
\end{equation*}
Substituting this into the inequality $2p+b-1 \leq k$ from \eqref{eq:rt-def22}, we obtain $2p+b-1 \leq p-1+(2b+N)/{3}$,
which simplifies to $3p+b \leq N$.
Hence, $2p+b-1 \leq N-p-1 \leq N-1$.
This contradicts the fact that the Heegaard genus of $S^3_m(K)$ is at least $N$ for every even integer $m$.
\end{proof}

\section*{Acknowledgements}
The author was partially supported by JSPS KAKENHI Grant Number 24KJ1561.





\end{document}